\documentclass[a4paper,11pt]{article}
\textheight=24cm \setlength{\topmargin}{0.1cm} \oddsidemargin0cm \evensidemargin0cm \setlength{\textwidth}{16cm}
\usepackage{amsmath,amssymb,amsfonts,graphicx,psfrag,subfig, amsthm}
\usepackage[latin1]{inputenc}

\newtheorem{thrm}{Theorem}[section]
\newtheorem{prpstn}[thrm]{Proposition}
\newtheorem{lmm}[thrm]{Lemma}
\newtheorem{dfntn}[thrm]{Definition}

\newtheorem{rmrk}[thrm]{Remark}

\def\be#1 {\begin{equation} \label{#1}}
\newcommand{\ee}{\end{equation}}

\newcommand{\mb}{\medskip\noindent}

\def \ll {\langle}
\def \rr {\rangle}

\def \I {\{1,\, \dots,p\}}
\def \P {\textsc{P}}
\newcommand{\R}{\mathbb R}

\def \pp {{\mathrm p}}
\def \qqq {{\mathrm{q}}}
\def \qq {\mathrm{q}}

\def \ff {{\bf U}}

\def \ff {\mathrm{f} }

\def \UU {{\bf U}}

\def \NN {\mathrm{N}}
\def \Qc {\tilde{Q}}

\def \virg {\, , \,\,}
\def \dsp {\displaystyle}
\def \vsp {\vspace{6pt}}
\def \noi {\noindent}

\def\sqw{\hbox{\rlap{\leavevmode\raise.3ex\hbox{$\sqcap$}}$%
\sqcup$}}
\def\findem{\ifmmode\sqw\else{\ifhmode\unskip\fi\nobreak\hfil
\penalty50\hskip1em\null\nobreak\hfil\sqw
\parfillskip=0pt\finalhyphendemerits=0\endgraf}\fi}

\title{Convergence order of a numerical scheme for sweeping process}

\day=30 \month=05 \year=2012

\date{\today }
\author{Fr\'ed\'eric Bernicot \\ CNRS - Universit\'e de Nantes \\ Laboratoire de Math\'ematiques Jean Leray \\ 2, Rue de la Houssini\`ere F-44322 Nantes Cedex 03 \\
frederic.bernicot@univ-nantes.fr
 \and Juliette Venel\\ LAMAV, Universit\'e de Valenciennes et du Hainaut-Cambr\'esis\\Mont Houy 59313 Valenciennes Cedex 9
\\juliette.venel@univ-valenciennes.fr}

\begin{document}

\maketitle
\tableofcontents

\begin{abstract}
In \cite{jvnm}, an implementable algorithm was introduced to compute discrete solutions of sweeping processes (i.e. specific first order differential inclusions). The convergence of this numerical scheme was proved thanks to compactness arguments. Here we establish that this algorithm is of order $\frac{1}{2}$.
The considered sweeping process involves a set-valued map given by a finite number of inequality constraints. The proof rests on a metric qualification condition between the sets associated to each constraint.
\end{abstract}

\mb {\bf Key-words:} Differential inclusions - Subdifferential calculus - Numerical analysis - Prediction-correction algorithm. 

\mb {\bf MSC:} 34A60, 65L20.

\section{Introduction}

In this paper, we are interested in the discretization of some first order differential inclusions
\be{eq1}  \frac{d\qqq}{dt}(t) + U(t,\qqq(t)) \ni \ff(t,\qqq(t)),\ee
involving a multivalued operator $U$ and a mapping $\ff$.

In the case where $U$ is the subdifferential of the indicatrix of a closed convex set $K$ ($U(\qqq(t))= \partial I_{K}(\qqq(t))$), G. Lippold has shown in \cite{Lippold} that the implicit Euler method converges with an order $h$, where $h$ is the time-step. In the framework of sweeping processes by moving convex sets $K(t)$ (i.e. $U(t,\qqq(t))= \partial I_{K(t)}(\qqq(t))$), J.J. Moreau has proved in \cite{Moreau} the existence of solutions of (\ref{eq1}) with $f=0$. More precisely, he introduced the {\it Catching-up algorithm} to construct discrete solutions converging with an order of precision $\frac{1}{2}$. In \cite{BS}, $U$ takes the form of a maximal monotone operator perturbed by a Lipschitz coercive function. Similarly the authors regain that the order of convergence is $\frac{1}{2}$. In \cite{Colombo}, $U$ is the proximal normal cone to a uniform prox-regular set $Q(t)$ (i.e. $U(t,\qqq(t))= \NN(Q(t),\qqq(t))$ (see Definition~\ref{def:N})). Even if $U$ is not maximal monotone, G. Colombo and V.V. Goncharov also recover the same convergence rate (with $f=0$). These schemes adapted to differential inclusions have given rise to other works, see for instance \cite{Dontchev, Monteiro,  Lempio,  Thibrelax, Thibbv}.

\mb Here, we deal with a perturbed sweeping process:
\be{eq:includiff} \frac{d\qqq}{dt}(t) + \NN(Q(t),\qqq(t)) \ni \ff(t,\qqq(t)),\ee
where $Q(t)$ is defined by a finite number of inequality constraints and known to be uniform prox-regular. We recall that the projection onto such a set is well-defined in its neighbourhood but it cannot be exactly computed. That is why we consider a modified numerical scheme, that is based on a local approximation of $Q(t)$. Near a point $\qqq\in Q(t)$ the set $Q(t)$ is replaced with a closed convex set $\Qc(t,\qqq)$. This substitution makes loose the prescribed regularity of the moving sets (actually these sets are generally supposed to vary in a absolutely continuous way in the sense of the Hausdorff distance). In order to go around this problem, we need to check metric qualification conditions between the sets associated to each constraint. In this context, we keep the same order of convergence:

\begin{thrm} \label{thm:rateconvintro} 
There exists a constant $C_0>0$ such that for $h$ small enough
$$ \|\qqq_h-\qqq\|_{L^\infty([0,T])} \leq C_0 h^{1/2},$$
where $\qqq$ and $\qqq_h$ are the continuous and discrete solutions of (\ref{eq:includiff}).
\end{thrm}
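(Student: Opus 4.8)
The plan is to run a Gronwall-type estimate on the squared error $\phi(t)=|\qqq_h(t)-\qqq(t)|^2$, where $\qqq_h$ denotes the (piecewise affine) interpolant of the discrete iterates. The four ingredients I would assemble beforehand are: the hypomonotonicity of the proximal normal cone coming from the uniform prox-regularity of $Q(t)$ (so that $\ll \xi_1-\xi_2, x_1-x_2\rr \geq -\tfrac{1}{\eta}|x_1-x_2|^2$ for unit normals $\xi_i\in\NN(Q(t),x_i)$); the Lipschitz continuity of $\ff$ in both arguments; the a priori bounds asserting that $\qqq$ is Lipschitz and that the discrete velocities are uniformly bounded, $|\qqq_h^{n+1}-\qqq_h^n|\leq Ch$, with bounded total variation $\sum_n|\qqq_h^{n+1}-\qqq_h^n|\leq C$; and the metric qualification condition relating $Q(t)$ to its local convex surrogate $\Qc(t,\qqq)$.

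Next I would write the two inclusions side by side. The continuous solution gives $-\dot\qqq(t)+\ff(t,\qqq(t))\in\NN(Q(t),\qqq(t))$, while the scheme yields, on each slab $(t_n,t_{n+1}]$, an inclusion of the form $-\dot\qqq_h(t)+\ff(t_n,\qqq_h(t_n))\in\NN(\Qc(t_{n+1},\qqq_h(t_n)),\bar\qqq_h(t))$, where $\bar\qqq_h(t)\equiv\qqq_h^{n+1}$ is the implicit value. Pairing the difference of the two ``velocities'' with $\qqq_h-\qqq$ and using the prox-regular characterization of both normal cones (tested respectively at the other trajectory's point), the monotone part is absorbed into $+C\phi(t)$ via hypomonotonicity, and the Lipschitz bound on $\ff$ contributes another $C\phi(t)$ together with a time-consistency term of size $Ch|\qqq_h-\qqq|$. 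Integrating in time, I would obtain $\phi(T) \leq \int_0^T C\phi(s)\,ds + \mathcal{E}_h$, the residual $\mathcal{E}_h$ collecting the consistency defects.

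The half-order then comes out of $\mathcal{E}_h$. The dominant defect is the mismatch between the implicit value $\bar\qqq_h$ and the interpolant $\qqq_h$ inside the pairing with the (bounded) normal direction, of the form $\int_0^T \ll\zeta(t),\bar\qqq_h(t)-\qqq_h(t)\rr\,dt$ with $|\zeta(t)|\leq C$ and $|\bar\qqq_h(t)-\qqq_h(t)|\leq|\qqq_h^{n+1}-\qqq_h^n|$ on each slab. Bounding this by $C\sum_n h\,|\qqq_h^{n+1}-\qqq_h^n|$ and applying Cauchy--Schwarz gives $C(\sum_n h^2)^{1/2}(\sum_n|\qqq_h^{n+1}-\qqq_h^n|^2)^{1/2}$; since $\sum_n h^2\leq Th$ and $\sum_n|\qqq_h^{n+1}-\qqq_h^n|^2\leq Ch\sum_n|\qqq_h^{n+1}-\qqq_h^n|\leq Ch$, this is $O(h)$, so $\mathcal{E}_h\leq C_1 h$. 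Gronwall's lemma and the matched initial datum $\phi(0)=0$ then yield $\phi(T)\leq C_1 h\,e^{CT}$, i.e. the claimed $\|\qqq_h-\qqq\|_{L^\infty}\leq C_0 h^{1/2}$.

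I expect the main obstacle to be the spatial set mismatch between $Q(t)$ and its convex approximation $\Qc(t,\qqq)$. Replacing $Q(t)$ by $\Qc(t,\qqq)$ destroys the Hausdorff-regularity of the moving set on which Moreau's classical estimates rely, so the prox-regular inequalities cannot be invoked verbatim: to test the continuous normal vector at $\bar\qqq_h(t)$ I must control $d(\bar\qqq_h(t),Q(t))$, and symmetrically $d(\qqq(t),\Qc(t_{n+1},\qqq_h^n))$. This is exactly where the metric qualification condition between the sets defined by the individual constraints enters --- it guarantees that the active-constraint description of the normal cones is stable, that the surrogate agrees with $Q(t)$ to sufficiently high order near the current iterate, and hence that these distances and the resulting normal-cone discrepancies contribute only to the $O(h)$ budget of $\mathcal{E}_h$ rather than polluting the leading order. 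Verifying that the qualification constant is uniform in $t$ and along the trajectory, and that the per-step increment bound $|\qqq_h^{n+1}-\qqq_h^n|\leq Ch$ survives the convex substitution, is the delicate part I would need to nail down carefully.
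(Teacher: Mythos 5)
Your proposal is correct in substance but follows a genuinely different route from the paper. You compare the interpolant directly with the continuous solution $\qqq$, pairing the two differential inclusions and invoking the hypomonotonicity of $\NN(Q(t),\cdot)$ granted by prox-regularity. The paper's proof of Theorem~\ref{thm:rateconv} never touches the continuous inclusion: it establishes a quantitative Cauchy estimate between \emph{two discrete solutions} $\qqq^n$ and $\qqq^m$. Since each step is a projection onto the closed \emph{convex} set $\Qc(t_{k+1}^n,\qqq_k^n)$, the elementary variational inequality (\ref{eq:ineg}) bounds the relevant pairing by $C_1\, d_{\Qc(\theta^n(t),\qqq^n(\rho^n(t)))}(y)$, and that distance, evaluated along the other discrete trajectory after a preliminary projection onto $Q(\theta^n(t))$ (using the Hausdorff--Lipschitz Proposition~\ref{Qlip}), is controlled quadratically by exactly the two ingredients you identified: the metric qualification condition (Theorem~\ref{thm:condqualif}) and the per-constraint quadratic estimate $d_{\Qc_i(t,\tilde{\qq})}(\qqq)\leq \frac{M}{2\alpha}|\qqq-\tilde{\qq}|^2$ of Proposition~\ref{prop:d1cont}. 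Gronwall then gives $\|\qqq^m-\qqq^n\|_{L^\infty([0,T])}\lesssim n^{-1/2}$, and the theorem follows by letting $m\to\infty$. The paper's route buys precisely what Remark~\ref{rem:hilbert} records: no compactness argument and no a priori regularity of the continuous solution are needed (the Cauchy property itself produces the limit), so the result extends to infinite-dimensional Hilbert spaces; it also dispenses with hypomonotonicity entirely, using only convexity of the surrogates. Your route presupposes existence of $\qqq$ with a Lipschitz bound on its velocity, but yields a more classical one-pass consistency-plus-stability argument.

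Two technical points in your sketch would need repair, both of which the paper handles explicitly. First, Theorem~\ref{thm:condqualif} is \emph{local}: it requires $\qq\in B(\tilde{\qq},r/4)$ and $d_{\Qc(t,\tilde{\qq})}(\tilde{\qq})\leq r/4$, so when $|\qqq(t)-\qqq_h^n|\geq r/8$ you cannot invoke the quadratic surrogate estimate; the paper's proof splits into two cases and, in the far regime, bounds the pairing linearly and absorbs it via $|\cdot|\leq \frac{8}{r}|\cdot|^2$. Your sketch omits this dichotomy. Second, since $\qqq(t)\in Q(t)$ while the surrogate lives at time $t_{n+1}$, Proposition~\ref{prop:d1cont} applies only after projecting onto $Q(t_{n+1})$, at a cost $K_L h$ controlled by Proposition~\ref{Qlip}. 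Both are routine within your framework. Note finally that the paper only assumes $\ff$ is $\frac{1}{2}$-H\"older in time, (\ref{ass:holder}); your Lipschitz-in-time hypothesis is stronger than necessary.
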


\mb We emphasize that this new approach allows us to avoid resorting to compactness arguments, used in \cite{jvnm}. So it permits to extend the convergence result of \cite{jvnm} in an infinite dimensional Hilbert space.

\bigskip
\noi The paper is structured as follows: In Section~\ref{sec:nota}, we describe the mathematical framework by specifying notations and assumptions which will be used throughout the paper. 
Then in Section~\ref{sec:scheme}, after recalling the prediction-correction scheme proposed in \cite{jvnm}, we prove in Theorem~\ref{thm:rateconv} that the discrete solution converges to the exact solution with order $\frac{1}{2}$. This proof rests on a metric qualification condition which is checked in Section~\ref{sec:mqc}. Finally, we illustrate this result with numerical simulations.

\section{Context}
\label{sec:nota}

 In the sequel, the space $\R^d$ is equipped with its Hilbertian structure. We write $B(x,r)$ for the open ball of center $x\in\R^d$ and radius $r>0$. \\
We consider perturbed sweeping process by a set-valued map $Q:[0,T]\rightrightarrows \R^d$ satisfying that for every $t\in [0,T]$, $Q(t) $ is the intersection of complements of smooth convex sets. 
Let us first specify the set-valued map $Q$.
For $i \in \I$, let $g_i: [0,T] \times \R^d \rightarrow \R$ be a convex function with respect to the second variable. 
For every $t \in [0,T]$, we introduce the sets $Q_i(t)$ defined by
\begin{equation}
 Q_i(t):=\left\{ \qqq \in \R^d \virg g_i(t,\qqq) \geq 0 \right\},
\label{def:Qi}
\end{equation}
and the feasible set $Q(t) $ (supposed to be nonempty) is
\begin{equation}
 Q(t):=\bigcap_{i=1}^{p} Q_i(t).
\label{def:Q}
\end{equation}
Let $\ff: [0,T] \times \R^d\rightarrow \R^d$ be a map, the associated (perturbed) sweeping process can be expressed as follows:
\begin{equation}
\left\{
\begin{array}{l}
\dsp \frac{d\qqq}{dt}(t) + \NN(Q(t),\qqq(t)) \ni \ff(t,\qqq(t)) \ \textmd{for a.e. } t \in [0,T] \vsp \\
\qqq(0)=\qqq_0 \in Q(0).
\end{array}
\right.
\label{eq:incldiff}
\end{equation}

We write $\NN(Q(t),\qqq(t))$ for the proximal normal cone to $Q(t)$ at $\qqq(t)$, defined below.

\begin{dfntn}[\cite{Clarke}] \label{def:N}
Let $S$ be a closed subset of $\R^d $.\\ The proximal
normal cone to $S$ at $x$ is defined by:  
$$ \NN(S,x):= \left \{
v \in \R^d, \  \exists \alpha >0, \  x \in \P_S(x + \alpha
 v) \right \}, $$
where  $$\P_S(y):=\{z \in S, \ d_S(y)= |y -z|\}, \quad \textmd{ with } \quad d_S(y):=\inf_{z \in S} |y- z|  $$
corresponds to the Euclidean projection onto $S$.
\end{dfntn}

\mb We now come to the notion of uniformly prox-regular set. It was initially introduced by H. Federer (in \cite{Federer}) in finite dimensional spaces under the name of ``positively reached set''. Then it was extended in infinite dimensional spaces and studied by F.H.~Clarke, R.J.~Stern and P.R.~Wolenski in \cite{Clarke} and by R.A. Poliquin, R.T.~Rockafellar and L.~Thibault in \cite{PRT}.

\begin{dfntn} Let $S$ be a closed subset of $\R^d$ and $\eta>0$. The set $S$ is said $\eta$-prox-regular if for all $x\in S$ and $v\in \NN(S,x)\setminus \{0\}$
 $$B\left(x+\eta\frac{v}{|v|},\eta\right) \cap S = \emptyset.$$
Equivalently (see \cite{PRT}), $S$ is $\eta$-prox-regular if and only if the projection operator $\P_S$ is single-valued on 
$$ \{z\in\R^d, \ d_S(z)<\eta\}.$$
\end{dfntn}

\noi This differential inclusion can be thought as follows: the point $\qqq(t)$, submitted to the perturbation $\ff(t,\qqq(t))$, has to stay in the feasible set $Q(t)$.
To obtain well-posedness results for (\ref{eq:incldiff}), we will make the following assumptions which ensure the uniform prox-regularity of $Q(t)$ for all $t \in [0,T] $. 
We suppose there exist $c>0$ and open sets $U_i(t) \supset Q_i(t) $ for all $ t$ in $[0,T]$ such that
 \begin{equation}
  \tag{A0}
 d_H(Q_i(t), \R^d \setminus U_i(t)) > c,
 \label{Ui}
\end{equation}
where $d_H $ denotes the Hausdorff distance. We set $U(t):= \bigcap_{i=1}^p U_i(t)$. Moreover we assume there exist constants $\alpha, \beta, M >0$ such that for all $ t$ in $[0,T]$, $g_i(t,\cdot)$ belongs to $ C^2(U_i(t))$ and satisfies 
\begin{equation}
 \tag{A1}
\forall \, \qqq \in U_i(t) \virg \alpha \leq |\nabla_{\qqq} g_i (t,\qqq) | \leq \beta,
\label{gradg}
\end{equation}
\begin{equation}
 \tag{A2}
\forall \, \qqq \in \R^d \virg|\partial_{t} g_i (t,\qqq) | \leq \beta,
\label{dtg}
\end{equation}
\begin{equation}
 \tag{A3}
\forall \, \qqq \in U_i(t) \virg  |\partial_{t}\nabla_{\qqq} g_i (t,\qqq) | \leq M,
\label{dtgradg}
\end{equation}
and 
\begin{equation}
 \tag{A4}
\forall \, \qqq \in U_i(t) \virg  |\mathrm{D}_\qqq^2 g_i (t,\qqq) | \leq M.
\label{hessg}
\end{equation}
For all $t\in [0,T]$ and $\qqq \in Q(t)$, we denote by $I(t,\qqq)$ the active set at $ \qqq$
\begin{equation}
 I(t,\qqq):=\left\{i \in\I \virg  g_i(t,\qqq)= 0 \right\}
\label{def:I}
\end{equation} 
and for every $\rho >0 $, we put:
\begin{equation}
 I_\rho(t,\qqq):=\left\{i \in\I \virg  g_i(t,\qqq) \leq \rho \right\}.
\label{def:Irho}
\end{equation} 
In addition we assume there exist $\gamma \geq 1 $ and $\rho >0 $ such that for all $t \in [0,T] $,
\begin{equation}
 \tag{A5}
\forall \, \qqq \in Q(t) \virg \forall \, \lambda_{i} \geq 0, \sum_{i \in I_\rho(t,\qqq)} \lambda_{i} | \nabla_\qqq \, g_i(t,\qqq)|  \leq  \gamma\left| \sum_{i \in I_\rho(t,\qqq)}  \lambda_{i} \nabla_\qqq \, g_i (t,\qqq) \right|.
\label{inegtrianginverserho}
\end{equation}
We will use the following weaker assumption:
\begin{equation}
 \tag{A5'}
\forall \, \qqq \in Q(t) \virg \forall \, \lambda_{i} \geq 0, \sum_{i \in I(t,\qqq)} \lambda_{i} | \nabla_\qqq \, g_i(t,\qqq)|  \leq  \gamma\left| \sum_{i \in I(t,\qqq)}  \lambda_{i} \nabla_\qqq \, g_i (t,\qqq) \right|.
\label{inegtrianginverse}
\end{equation}

\mb In particular, this last assumption implies that for all $t $, the gradients of the active inequality constraints $\nabla_\qqq \, g_i (t,\qqq) $ are positive-linearly independent at all $\qqq \in Q(t)$, which is usually called the Mangasarian-Fromowitz constraint qualification (MFCQ). Conversely the MFCQ condition at a point $\qqq $ yields a local version of Inequality (\ref{inegtrianginverse}).

\mb We recall some useful results established in \cite{jvnm} (Propositions 2.8, 2.9, 2.11 and Theorem 2.12 in \cite{jvnm}).

\begin{prpstn}
 For all $t \in [0,T] $ and $\qqq \in Q(t) $, $$\NN(Q(t),\qqq )= \sum \NN(Q_{i}(t),\qqq ) = - \sum_{i \in I(t,\qqq)} \R^+ \nabla_\qqq \, g_i(t,\qqq).$$
 \label{prop:coneprox}
\end{prpstn}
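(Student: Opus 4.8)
The plan is to establish the two equalities separately, the second one amounting to the computation of the proximal normal cone of a single superlevel set. Since each $g_i(t,\cdot)$ is convex, the set $Q_i(t)=\{g_i(t,\cdot)\geq 0\}$ is the complement of an open convex set, and at a boundary point $\qqq$ (where $g_i(t,\qqq)=0$) the vector $\nabla_\qqq g_i(t,\qqq)$ points inward, with $|\nabla_\qqq g_i(t,\qqq)|\geq\alpha>0$ by (A1). I would first record that for an inactive index $i\notin I(t,\qqq)$ the point $\qqq$ lies in the interior of $Q_i(t)$, so that $\NN(Q_i(t),\qqq)=\{0\}$, whereas for an active index I claim $\NN(Q_i(t),\qqq)=-\R^+\nabla_\qqq g_i(t,\qqq)$. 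Granting this, the Minkowski sum collapses to $\sum_{i=1}^p\NN(Q_i(t),\qqq)=-\sum_{i\in I(t,\qqq)}\R^+\nabla_\qqq g_i(t,\qqq)$, which is exactly the claimed right-hand side; it then remains to identify this set with $\NN(Q(t),\qqq)$. Both the single-constraint computation and the identification for $Q(t)$ split into two inclusions, handled by the same two ideas.

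For the inclusion ``$\supseteq$'' I would use the proximal characterization behind Definition~\ref{def:N}: a vector $v$ belongs to $\NN(S,\qqq)$ as soon as there is $s>0$ with
\[
\langle v,\, z-\qqq\rangle \;\leq\; \frac{1}{2s}\,|z-\qqq|^2 \qquad \text{for all } z\in S \text{ close to }\qqq ,
\]
the localization to a neighbourhood being legitimate thanks to the uniform prox-regularity of the sets involved (for $s$ small the point $\qqq+sv$ is close to $\qqq$, so only nearby competitors enter the projection). Taking $v=-\sum_{i\in I(t,\qqq)}\lambda_i\nabla_\qqq g_i(t,\qqq)$ with $\lambda_i\geq 0$ and $z\in Q(t)$, a second-order Taylor expansion of each active $g_i(t,\cdot)$ together with the Hessian bound (A4) gives
\[
\langle \nabla_\qqq g_i(t,\qqq),\, z-\qqq\rangle \;\geq\; g_i(t,z)-\frac{M}{2}\,|z-\qqq|^2 \;\geq\; -\frac{M}{2}\,|z-\qqq|^2 ,
\]
where I used $g_i(t,\qqq)=0$ and $g_i(t,z)\geq 0$ (as $z\in Q(t)\subseteq Q_i(t)\subseteq U_i(t)$, the segment $[\qqq,z]$ staying in $U_i(t)$ for $z$ close to $\qqq$). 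Multiplying by $\lambda_i\geq 0$ and summing produces the required quadratic inequality with $\frac{1}{2s}=\frac{M}{2}\sum_i\lambda_i$, so $v\in\NN(Q(t),\qqq)$. The very same estimate with a single index yields $-\R^+\nabla_\qqq g_i(t,\qqq)\subseteq\NN(Q_i(t),\qqq)$.

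The reverse inclusion ``$\subseteq$'' is where the qualification condition enters, and I expect it to be the main obstacle. Given $v\in\NN(Q(t),\qqq)$, Definition~\ref{def:N} provides $s>0$ such that $\qqq$ realizes the projection of $y:=\qqq+sv$ onto $Q(t)$; hence $\qqq$ is a local minimizer of $z\mapsto|z-y|^2$ subject to the constraints $g_i(t,z)\geq 0$, $i\in\I$. Assumption (A5') forces the active gradients to be positively linearly independent: if $\sum_{i\in I(t,\qqq)}\mu_i\nabla_\qqq g_i(t,\qqq)=0$ with $\mu_i\geq 0$, then (A5') together with $|\nabla_\qqq g_i(t,\qqq)|\geq\alpha>0$ forces every $\mu_i=0$. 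This is precisely the Mangasarian-Fromowitz constraint qualification, under which the Karush--Kuhn--Tucker conditions hold at $\qqq$: there are multipliers $\mu_i\geq 0$, vanishing for inactive indices, with
\[
-2s\,v \;=\; \nabla_z|z-y|^2\big|_{z=\qqq} \;=\; \sum_{i\in I(t,\qqq)}\mu_i\,\nabla_\qqq g_i(t,\qqq).
\]
Dividing by $2s>0$ exhibits $v$ as an element of $-\sum_{i\in I(t,\qqq)}\R^+\nabla_\qqq g_i(t,\qqq)$. Specializing to a single constraint, where nondegeneracy of $\nabla_\qqq g_i(t,\qqq)$ is the only qualification needed, gives $\NN(Q_i(t),\qqq)\subseteq-\R^+\nabla_\qqq g_i(t,\qqq)$, and combining the four inclusions closes the chain of equalities.
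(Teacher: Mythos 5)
Your proof is correct, but there is no in-paper proof to compare it with: Proposition \ref{prop:coneprox} is recalled from \cite{jvnm} (Proposition 2.8 there) without demonstration. Taken as a self-contained alternative, your architecture is sound and the two halves are handled by appropriate tools. For the inclusion $-\sum_{i\in I(t,\qqq)}\R^+\nabla_\qqq g_i(t,\qqq)\subseteq \NN(Q(t),\qqq)$ you correctly recognize that convexity of $g_i(t,\cdot)$ gives the inner-product inequality in the wrong direction, so the second-order Taylor bound with (A4) is genuinely what is needed; this matches the standard route. For the reverse inclusion you replace nonsmooth-analysis machinery by the classical KKT theorem at the projection problem $\min_{z\in Q(t)}|z-y|^2$, with (A5') and (A1) giving positive linear independence of the active gradients and Gordan's theorem converting this into MFCQ --- exactly the equivalence the paper itself records in the remark following (A5'). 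This buys elementarity: note that for the analogous qualification statement about the approximating sets $\Qc_i(t,\qqq)$, where a quantitative distance estimate (linear regularity) rather than a cone identity is required, the paper must deploy heavier tools in Section \ref{sec:mqc} (Clarke subdifferentials, Ekeland's principle, the Ioffe--Outrata lemma); for the present qualitative identity your KKT shortcut suffices.

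One blemish should be repaired. Your appeal to uniform prox-regularity to justify localizing the proximal inequality is both unnecessary and potentially circular: unnecessary, because a local quadratic inequality upgrades to a global one by Cauchy--Schwarz (for $z\in Q(t)$ with $|z-\qqq|\geq\delta$ one has $\ll v, z-\qqq\rr \leq (|v|/\delta)\,|z-\qqq|^2$), after which $\qqq\in \P_{Q(t)}(\qqq+\alpha v)$ for $\alpha>0$ small yields $v\in\NN(Q(t),\qqq)$ directly from Definition \ref{def:N}; circular, because in this framework the prox-regularity of $Q(t)$ (Proposition \ref{Qprox}) is itself derived in \cite{jvnm} from the very cone formula you are proving. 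Likewise, keeping the segment $[\qqq,z]$ inside $U_i(t)$ needs only the openness of $U_i(t)\supseteq Q_i(t)$, since the statement is pointwise in $\qqq$ and no uniformity in the localization radius is required. With the prox-regularity mention deleted, the argument stands as written.
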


\begin{prpstn}
Under assumptions (\ref{gradg}), (\ref{hessg}) and (\ref{inegtrianginverse}), the set $Q(t)$ is $\eta$-prox-regular with $ \eta = \dsp  \frac{\alpha}{M \gamma} , $ 
for every $t \in [0,T] $.
\label{Qprox}
\end{prpstn}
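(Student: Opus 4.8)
The plan is to verify the equivalent ``second-order'' characterization of prox-regularity: a closed set $S$ is $\eta$-prox-regular if and only if
$$\langle v,\, y-x\rangle \le \frac{|v|}{2\eta}\,|y-x|^2 \qquad \text{for all } x\in S,\ v\in\NN(S,x),\ y\in S,$$
which is obtained by expanding the squared distance $|y-(x+\eta v/|v|)|^2$ in the ball condition of the definition. I would first observe that the ball $B(x+\eta v/|v|,\eta)$ is contained in $B(x,2\eta)$, so that only points $y\in Q(t)$ with $|y-x|<2\eta$ need to be tested; by (\ref{Ui}) this restriction keeps the whole segment $[x,y]$ inside the $C^2$-region $U_i(t)$ for each active index $i$, which is what makes the Taylor expansion below legitimate.

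Next, fix $t$, a point $x\in Q(t)$ and $0\neq v\in\NN(Q(t),x)$. By Proposition~\ref{prop:coneprox} I can write $v=-\sum_{i\in I(t,x)}\lambda_i\,\nabla_\qqq g_i(t,x)$ with coefficients $\lambda_i\ge 0$, so that for $y\in Q(t)$
$$\langle v,\,y-x\rangle = -\sum_{i\in I(t,x)}\lambda_i\,\langle\nabla_\qqq g_i(t,x),\,y-x\rangle.$$
The core estimate is a lower bound on each scalar product. A second-order Taylor expansion of $g_i(t,\cdot)$ along $[x,y]$, together with the Hessian bound (\ref{hessg}), gives
$$\langle\nabla_\qqq g_i(t,x),\,y-x\rangle \ge g_i(t,y)-g_i(t,x)-\frac{M}{2}\,|y-x|^2.$$
Since $i$ is active we have $g_i(t,x)=0$, and since $y\in Q(t)\subset Q_i(t)$ we have $g_i(t,y)\ge 0$; hence the right-hand side is $\ge-\frac{M}{2}|y-x|^2$. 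Substituting and using $\lambda_i\ge 0$ yields $\langle v,y-x\rangle\le\frac{M}{2}\big(\sum_{i\in I(t,x)}\lambda_i\big)|y-x|^2$. Note that the convexity of $g_i$ alone only controls the opposite inequality, so the Hessian bound is genuinely needed at this step.

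Finally I would convert $\sum_i\lambda_i$ into $|v|$. By the gradient lower bound (\ref{gradg}) and the reverse-triangle qualification inequality (\ref{inegtrianginverse}),
$$\alpha\sum_{i\in I(t,x)}\lambda_i \le \sum_{i\in I(t,x)}\lambda_i\,|\nabla_\qqq g_i(t,x)| \le \gamma\,\Big|\sum_{i\in I(t,x)}\lambda_i\,\nabla_\qqq g_i(t,x)\Big| = \gamma\,|v|,$$
so $\sum_i\lambda_i\le\frac{\gamma}{\alpha}|v|$. Combining this with the previous display gives $\langle v,y-x\rangle\le\frac{M\gamma}{2\alpha}\,|v|\,|y-x|^2=\frac{|v|}{2\eta}\,|y-x|^2$ with $\eta=\frac{\alpha}{M\gamma}$, which is exactly the sought characterization. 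I expect the only delicate point to be the geometric localization from the first paragraph, namely justifying that testing $|y-x|<2\eta$ suffices and that, via (\ref{Ui}), the segment $[x,y]$ stays where $g_i(t,\cdot)$ is $C^2$ with the uniform Hessian bound; once that is secured, the algebraic heart of the argument is the short computation above.
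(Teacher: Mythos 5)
Your overall strategy is the right one, and it is in substance the expected argument: note that this paper never proves Proposition~\ref{Qprox} itself (it is recalled from \cite{jvnm}), and the standard proof behind the citation is precisely your chain --- the hypomonotonicity reformulation $\ll v, y-x\rr \le \frac{|v|}{2\eta}|y-x|^2$ of $\eta$-prox-regularity, the decomposition $v=-\sum_{i\in I(t,x)}\lambda_i\nabla_\qqq g_i(t,x)$ from Proposition~\ref{prop:coneprox}, the second-order Taylor lower bound $\ll \nabla_\qqq g_i(t,x),y-x\rr \ge g_i(t,y)-g_i(t,x)-\frac{M}{2}|y-x|^2$ using (\ref{hessg}) with $g_i(t,x)=0$, $g_i(t,y)\ge 0$, and the conversion $\sum_i\lambda_i\le\frac{\gamma}{\alpha}|v|$ via (\ref{gradg}) and (\ref{inegtrianginverse}). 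That algebra, including the equivalence of the ball condition with the quadratic inequality and the reduction to $|y-x|<2\eta$ by Cauchy--Schwarz, is all correct.

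The gap sits exactly at the step you flagged as delicate and then asserted rather than proved: that by (\ref{Ui}) the restriction $|y-x|<2\eta$ keeps $[x,y]$ inside $U_i(t)$. What (\ref{Ui}) gives is that the $c$-neighbourhood of $Q_i(t)$ lies in $U_i(t)$; since a point of $[x,y]$ lies within $|y-x|/2<\eta$ of the nearer endpoint (both endpoints being in $Q(t)\subset Q_i(t)$), your containment requires $\eta=\frac{\alpha}{M\gamma}\le c$, a relation that appears nowhere in the hypotheses. In the window $2c\le |y-x|<2\eta$ the Hessian bound (\ref{hessg}) is unavailable on the middle of the segment, and the Taylor estimate can genuinely fail there: in dimension one, take $c<1/4$, let $g(t,x)=-x$ for $x\le 2c$ and $g(t,x)=x-1$ for $x\ge 1-2c$, joined by a smooth convex interpolation on $(2c,1-2c)$; then $U=(-\infty,2c)\cup(1-2c,+\infty)$ satisfies (\ref{Ui}), on $U$ one has $\alpha=\beta=1$ and $|g''|=0\le M$ for $M$ as small as one likes (the large curvature is hidden outside $U$), $\gamma=1$, yet $Q=(-\infty,0]\cup[1,+\infty)$ is only $\frac{1}{2}$-prox-regular across the gap while $\frac{\alpha}{M\gamma}$ is arbitrarily large --- and indeed at $x=0$, $v=1$, $y=1$ your Taylor step breaks precisely because $[0,1]$ leaves $U$. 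So the localization must be repaired, not merely invoked: either add the standing relation $\frac{\alpha}{M\gamma}\le c$, or run your argument verbatim with $\eta'=\min\left(c,\frac{\alpha}{M\gamma}\right)$ (trivial case $|y-x|\ge 2\eta'$ by Cauchy--Schwarz, segment containment and Taylor step for $|y-x|<2\eta'$), which is the constant your proof actually delivers from (\ref{Ui}), (\ref{gradg}), (\ref{hessg}) and (\ref{inegtrianginverse}) alone.
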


\begin{prpstn} \label{Qlip} Under assumptions (\ref{Ui}), (\ref{gradg}), (\ref{dtg}) and (\ref{inegtrianginverserho}), the set-valued map $Q$ is Lipschitz continuous with respect to the Hausdorff distance. More precisely there exists $K_L >0 $ such that $$ \forall t,s \in [0,T]  \virg d_H(Q(t),Q(s)) \leq  K_L |t-s |.$$
\end{prpstn}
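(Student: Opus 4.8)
The plan is to prove that $Q(t)=\bigcap_{i=1}^p Q_i(t)$ varies Lipschitz-continuously in the Hausdorff metric by bounding $d_H(Q(t),Q(s))$ in both directions, i.e. by showing that every point of $Q(t)$ lies within $K_L|t-s|$ of some point of $Q(s)$ and symmetrically. Since the Hausdorff distance is symmetric in its arguments, it suffices to fix $t,s\in[0,T]$ and produce, for an arbitrary $\qqq\in Q(t)$, a nearby point $\qqq'\in Q(s)$ with $|\qqq-\qqq'|\leq K_L|t-s|$. The natural candidate is $\qqq'=\P_{Q(s)}(\qqq)$, so the real work is to estimate $d_{Q(s)}(\qqq)$. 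The key mechanism is that moving from time $t$ to time $s$ perturbs each constraint function only by $O(|t-s|)$ via assumption (\ref{dtg}), namely $|g_i(s,\qqq)-g_i(t,\qqq)|\leq\beta|t-s|$, so a point feasible at time $t$ violates each constraint at time $s$ by at most $\beta|t-s|$.

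First I would reduce the distance estimate to a statement about the active (or nearly active) constraints. For $\qqq\in Q(t)$ we have $g_i(t,\qqq)\geq 0$, hence $g_i(s,\qqq)\geq -\beta|t-s|$, so $\qqq$ is at worst a small distance outside each $Q_i(s)$. For constraints that are far from active (say $g_i(t,\qqq)>\rho$ for the threshold $\rho$ of (\ref{def:Irho})), choosing $|t-s|$ small keeps them satisfied, so only the indices in $I_\rho(t,\qqq)$ can be binding. The next step is to show that the qualification condition (\ref{inegtrianginverserho}) forces the set $Q(s)$ to have a uniform ``inner cone'' near $\qqq$: because the active gradients are positive-linearly independent in the strong quantitative sense of (A5), one can find a direction $w$ with $\langle\nabla_\qqq g_i(s,\qqq),w\rangle$ bounded below by a positive multiple of $|\nabla_\qqq g_i(s,\qqq)|$ simultaneously for all $i\in I_\rho(t,\qqq)$. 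Moving from $\qqq$ along $w$ then increases every nearly-active $g_i(s,\cdot)$ at a controlled rate bounded below using the lower gradient bound $\alpha$ from (\ref{gradg}), while the curvature is controlled by the Hessian bound $M$ from (\ref{hessg}). This yields a point in $Q(s)$ at distance $O(|t-s|)$ from $\qqq$.

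Concretely, I would set $\qqq(\tau)=\qqq+\tau w$ for a suitably normalized descent-into-feasibility direction $w$ and perform a first-order Taylor expansion of $g_i(s,\qqq(\tau))$: the zeroth-order term is $g_i(s,\qqq)\geq-\beta|t-s|$, the first-order term is $\tau\langle\nabla_\qqq g_i(s,\qqq),w\rangle$ which is positive and of size $\gtrsim\tau\alpha/\gamma$ by (A5) and (A1), and the second-order remainder is $O(\tau^2 M)$. Choosing $\tau$ proportional to $|t-s|$ makes the linear gain dominate the constraint violation $\beta|t-s|$ and the quadratic error, giving $g_i(s,\qqq(\tau))\geq 0$ for all relevant $i$, hence $\qqq(\tau)\in Q(s)$ with $|\qqq(\tau)-\qqq|=\tau|w|\leq K_L|t-s|$. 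The constant $K_L$ then depends only on $\alpha,\beta,\gamma,M$ and the geometry threshold $\rho$, but not on $t,s$ or the particular $\qqq$, which is exactly the uniformity required.

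The main obstacle I anticipate is constructing the common ascent direction $w$ and extracting the uniform lower bound on all the directional derivatives $\langle\nabla_\qqq g_i(s,\qqq),w\rangle$ from the positive-linear-independence inequality (\ref{inegtrianginverserho}); this is essentially a quantitative Mangasarian--Fromowitz argument, and the delicate point is that the qualification is stated at time $t$ while the gradients being controlled are at time $s$, so I would need (\ref{dtgradg}) to transfer the qualification from the gradients $\nabla_\qqq g_i(t,\qqq)$ to $\nabla_\qqq g_i(s,\qqq)$ up to an error $M|t-s|$ that is absorbable for $|t-s|$ small. A secondary technical care is ensuring the entire segment $\qqq(\tau)$ stays inside the open neighborhoods $U_i(s)$ where the $C^2$ bounds (A1) and (A4) are valid, which is guaranteed by assumption (\ref{Ui}) once $|t-s|$ is small enough.
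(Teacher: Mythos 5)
Your overall strategy --- fix $\qqq\in Q(t)$, observe $g_i(s,\qqq)\geq g_i(t,\qqq)-\beta|t-s|\geq-\beta|t-s|$ by (\ref{dtg}), discard the constraints outside $I_\rho(t,\qqq)$, and walk a distance $O(|t-s|)$ along a common ascent direction $w$ extracted from (\ref{inegtrianginverserho}) by a minimal-norm-point/separation argument --- is exactly the mechanism of the proof this paper recalls from \cite{jvnm} (Proposition 2.11 there). However, as written your argument proves a weaker statement than the proposition: the two extra tools you invoke, (\ref{dtgradg}) to transfer the gradients from time $t$ to time $s$ and (\ref{hessg}) to control the second-order remainder, are deliberately absent from the hypotheses, which are only (\ref{Ui}), (\ref{gradg}), (\ref{dtg}) and (\ref{inegtrianginverserho}). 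Both uses are avoidable, and seeing why is the key point you missed: $g_i(t,\cdot)$ is \emph{convex}, so the first-order expansion is a global lower bound and no Hessian term ever appears; and the time-transfer problem disappears entirely if you expand in space at time $t$ --- where (\ref{inegtrianginverserho}) and your direction $w$ live --- and only afterwards change the time slot using (\ref{dtg}):
$$ g_i(s,\qqq+\tau w)\geq g_i(t,\qqq+\tau w)-\beta|t-s|\geq g_i(t,\qqq)+\tau\left\langle \nabla_\qqq g_i(t,\qqq),w\right\rangle-\beta|t-s|\geq \tau\frac{\alpha}{\gamma}-\beta|t-s| $$
for $i\in I_\rho(t,\qqq)$ and $|w|=1$, so that $\tau=\frac{\gamma\beta}{\alpha}|t-s|$ gives feasibility of these constraints; for $i\notin I_\rho(t,\qqq)$ one uses $|\nabla_\qqq g_i(t,\cdot)|\leq\beta$ along the segment (which stays in $U_i(t)$ by (\ref{Ui}) once $\tau\leq c$) to get $g_i(t,\qqq+\tau w)\geq\rho-\beta\tau$. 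This reordering also spares you a difficulty you did not flag: to even write $\nabla_\qqq g_i(s,\qqq)$ in your version you would first have to show $\qqq\in U_i(s)$, which is not immediate from the hypotheses.

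There is a second genuine omission: the construction above (and yours) only yields $d_{Q(s)}(\qqq)\leq\frac{\gamma\beta}{\alpha}|t-s|$ for $|t-s|$ below a threshold depending on $\rho,\beta,\gamma,\alpha,c$ --- small enough that $\beta\tau+\beta|t-s|\leq\rho$ keeps the inactive constraints satisfied (and, in your variant, small enough that the error $M|t-s|$ from (\ref{dtgradg}) does not destroy the lower bound $\alpha/\gamma$). The proposition asserts the estimate for \emph{all} $t,s\in[0,T]$, so you must add the globalization step: subdivide $[s,t]$ into subintervals of length below the threshold and sum, using the triangle inequality for the Hausdorff distance; since the local constant is uniform, this gives the same $K_L$ globally. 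Finally, a cosmetic point: you never need the projection $\P_{Q(s)}(\qqq)$ (which is a priori multivalued here), since the constructed point $\qqq+\tau w\in Q(s)$ already witnesses $d_{Q(s)}(\qqq)\leq\tau$.
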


\begin{thrm}
\label{theo:wp}
Let $T>0 $ and $ \ff :[0,T] \times \R^d \rightarrow \R^d $ be a measurable map satisfying:
\begin{align}
 & \exists K_\ff >0 \virg \forall \qqq,\tilde{\qqq} \in \bigcup_{s \in [0,T]} Q(s) \virg \forall t \in [0,T] \virg  | \ff(t,\qqq) -\ff(t,\tilde{\qqq})| \leq K_\ff |\qqq - \tilde{\qqq}|
\label{lip} \vsp \\
& \exists L_\ff >0 \virg \forall \qqq \in \bigcup_{s \in [0,T]} Q(s) \virg \forall t \in [0,T] \virg | \ff(t,\qqq)|\leq L_\ff(1+|\qqq|).
\label{lingro}
\end{align}
Then, under Assumptions (\ref{Ui}), (\ref{gradg}), (\ref{dtg}), (\ref{hessg}) and (\ref{inegtrianginverserho}) for all $\qqq_0 \in Q(0) $, the
following problem 
\begin{equation}
 \left \{ 
\begin{array}{l}
\displaystyle\frac {d \qqq}{dt}(t) + \NN(Q(t),\qqq(t)) \ni \ff(t,\qqq(t)) \ \textmd{for a.e. } t \in [0,T]  \vsp
\\
\qqq(0)=\qqq_0,
\end{array}
\right.
\label{incldiff2}
\end{equation}
has one and only one absolutely continuous solution $\qqq $ satisfying $ \qqq(t) \in Q(t)$ for every $t \in [0,T] $.
\end{thrm}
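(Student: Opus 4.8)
The plan is to obtain existence through Moreau's catching-up (time-stepping) algorithm and to derive uniqueness from a Gronwall argument based on the hypomonotonicity of the proximal normal cone of a uniformly prox-regular set. For existence, I would fix a time step $h=T/n$ with nodes $t_k=kh$ and define a discrete trajectory by $\qqq_0^n:=\qqq_0$ and
$$ \qqq_{k+1}^n := \P_{Q(t_{k+1})}\bigl(\qqq_k^n + h\,\ff(t_k,\qqq_k^n)\bigr). $$
The first task is to check that this scheme is well defined. Since $\qqq_k^n\in Q(t_k)$, the Lipschitz continuity of $Q$ (Proposition~\ref{Qlip}) gives $d_{Q(t_{k+1})}(\qqq_k^n)\le K_L h$, while the linear growth (\ref{lingro}) controls $h\,|\ff(t_k,\qqq_k^n)|$; hence for $h$ small the predicted point lies in the $\eta$-tube around $Q(t_{k+1})$, where by $\eta$-prox-regularity (Proposition~\ref{Qprox}) the projection is single-valued.

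The same two ingredients yield the a priori estimate $|\qqq_{k+1}^n-\qqq_k^n| \le d_{Q(t_{k+1})}(\qqq_k^n) + 2h\,|\ff(t_k,\qqq_k^n)| \le C\,h$, after a discrete Gronwall argument bounding $|\qqq_k^n|$ through (\ref{lingro}). I would then introduce the piecewise-affine interpolant $\qqq_n$ together with the piecewise-constant interpolants of the nodes: the bound above shows that these are uniformly Lipschitz in $t$ and uniformly bounded, so in the finite-dimensional space $\R^d$ the Arzel\`a--Ascoli theorem extracts a subsequence converging uniformly to a Lipschitz limit $\qqq$, with $\dot\qqq_n\rightharpoonup\dot\qqq$ weakly-$\ast$ in $L^\infty$. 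Rewriting the scheme as
$$ \ff(t_k,\qqq_k^n)-\frac{\qqq_{k+1}^n-\qqq_k^n}{h} \in \NN\bigl(Q(t_{k+1}),\qqq_{k+1}^n\bigr), $$
the delicate point is to pass to the limit in this inclusion. Because the normal cone is not monotone, I would rely on the hypomonotonicity estimate valid for $\eta$-prox-regular sets, namely that for $x_1,x_2\in Q(t)$ and $w_i\in\NN(Q(t),x_i)$ one has $\ll w_1-w_2,\,x_1-x_2\rr \ge -\tfrac{1}{2\eta}(|w_1|+|w_2|)\,|x_1-x_2|^2$, combined with the uniform bound on $|\qqq_{k+1}^n-\qqq_k^n|/h$ (which bounds the discrete normal vectors via (\ref{lingro})), to show that the graph constraint survives in the limit and that $\qqq(t)\in Q(t)$ for every $t$.

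For uniqueness, suppose $\qqq^1,\qqq^2$ are two solutions and set $v_i:=\ff(t,\qqq^i)-\dot\qqq^i\in\NN(Q(t),\qqq^i)$. Since each solution is Lipschitz, $\dot\qqq^i\in L^\infty$, so $|v_i|\le L$ a.e. by (\ref{lingro}). Differentiating, for a.e. $t$,
$$ \frac{d}{dt}\tfrac12|\qqq^1-\qqq^2|^2 = \ll\ff(t,\qqq^1)-\ff(t,\qqq^2),\,\qqq^1-\qqq^2\rr - \ll v_1-v_2,\,\qqq^1-\qqq^2\rr. $$
The first term is at most $K_\ff|\qqq^1-\qqq^2|^2$ by (\ref{lip}), and the hypomonotonicity estimate above bounds the second by $\tfrac{L}{\eta}|\qqq^1-\qqq^2|^2$, so that $\tfrac{d}{dt}\tfrac12|\qqq^1-\qqq^2|^2 \le (K_\ff+\tfrac{L}{\eta})|\qqq^1-\qqq^2|^2$. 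Since $\qqq^1(0)=\qqq^2(0)=\qqq_0$, Gronwall's lemma forces $\qqq^1\equiv\qqq^2$.

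The main obstacle is the passage to the limit in the differential inclusion: the lack of monotonicity of $\NN(Q(t),\cdot)$ means the limit cannot be obtained by a purely weak argument, and one must combine the uniform velocity bound with the prox-regularity (hypomonotonicity) estimate and the closedness it provides to recover the inclusion for the limiting trajectory.
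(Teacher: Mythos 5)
The paper does not actually prove this theorem: it is recalled verbatim from \cite{jvnm} (Theorem 2.12 there), whose proof in turn rests on the catching-up/prox-regularity machinery for perturbed sweeping processes (Edmond--Thibault \cite{Thibbv}), once Propositions \ref{Qprox} and \ref{Qlip} guarantee that $Q(t)$ is uniformly $\eta$-prox-regular and Lipschitz in the Hausdorff distance. Your proposal reconstructs essentially that underlying argument: Moreau's catching-up scheme, well-definedness of the projection inside the $\eta$-tube, uniform discrete velocity bounds, Arzel\`a--Ascoli, passage to the limit via the hypomonotonicity inequality $\ll v, y-x\rr \leq \frac{|v|}{2\eta}|y-x|^2$ characterizing proximal normals to prox-regular sets (affine in $v$ once $|v|$ is bounded, hence stable under weak limits after integration against test functions), and a Gronwall uniqueness argument. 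So the skeleton is the right one and matches the cited source's route.

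Two points need repair, one of them substantive. The theorem assumes $\ff$ merely \emph{measurable} in $t$, and your scheme evaluates $\ff$ at the nodes $t_k$. Pointwise values of a measurable function on a null set carry no information about the inclusion: modify $\ff$ on the null set of all nodes $\{t_k^n : k \leq n,\ n\in\mathbb{N}\}$ and every discrete trajectory changes, while the solution of (\ref{incldiff2}) --- which only sees $\ff$ for a.e.\ $t$ --- does not; the scheme as written can therefore converge to the wrong limit. The standard fix, which the paper itself signals in the remark following Theorem \ref{theo:qhq}, is to replace $\ff(t_k,\qqq_k^n)$ by the averages $\frac{1}{h}\int_{t_k}^{t_{k+1}} \ff(s,\qqq_k^n)\,ds$ and to invoke Lebesgue differentiation in the limit passage. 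The lesser point concerns uniqueness: you assert that any solution is Lipschitz in order to get $|v_i|\leq L$, but the theorem only provides absolutely continuous solutions, and the Lipschitz bound requires a separate a priori estimate (e.g.\ $|v_i(t)|\leq |\ff(t,\qqq^i(t))|+K_L$, obtained from prox-regularity together with the $K_L$-Lipschitz motion of $Q$). Alternatively, and more simply, note that $|v_i|\leq |\ff(\cdot,\qqq^i)|+|\dot\qqq^i|\in L^1(0,T)$, so your differential inequality has an integrable coefficient and Gronwall's lemma with $L^1$ data already yields uniqueness --- the $L^\infty$ bound you assumed is not needed.
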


\section{Time-stepping scheme}
\label{sec:scheme}
Let us detail the numerical scheme proposed in \cite{jvnm} to approximate the solution
 of~(\ref{incldiff2}) on the time interval $[0, T] $.  Let $ n\in \mathbb{N}^\star$, $h = T/n $ be the time
 step and $t_k^n=k h $ be
 the computational times. 
 We denote by 
$\qqq_k^n $ the approximation of  $\qqq(t_k^n )$ with $\qqq_0^n=\qqq_0 $. The next configuration is computed as follows:
\begin{equation}
 \qqq_{k+1}^n : = \P_{\Qc( t_{k+1}^n ,\qqq_k^n)} (\qqq_k^n  + h \ff(t_k^n, \qqq_k^n ))
\label{schema}
\end{equation} 
with
$$
\Qc(t,\qqq)  := \{ \tilde \qqq \in \R^d \virg g_{i}(t,\qqq) + \langle \nabla_\qqq \, g_{i}(t,\qqq), 
\tilde \qqq - \qqq \rangle \geq 0 \quad \forall \, i 
\} \textmd{ for } \qqq \in U(t):=\bigcap_{i=1}^p U_i(t).
$$
We recall that all the gradients $\nabla_\qqq \, g_{i}(t,\qqq)$ are well-defined provided that $\qqq\in U(t)$. Indeed
it can be checked that this scheme is well-defined, more precisely for $h < \frac{c}{K_L} $
\begin{equation} \Qc(t_{k+1}^n,\qqq_k^n ) \subset Q(t_{k+1}^n)\subset U(t_{k+2}^n) \label{eq:inclusion}
 \end{equation}
 with $c$ and $K_L $ respectively given by Assumption~(\ref{Ui}) and Proposition \ref{Qlip} (see Proposition 3.1 in \cite{jvnm}). Thus every computed configuration is feasible and the set $\Qc(t,\qqq)$ can be seen as an inner convex approximation of $Q(t)$ with respect to $\qqq$. The dependence on $\qqq$ of $\Qc(t,\qqq)$ do not allow us to have recourse to usual techniques (see for instance \cite{Moreau, Monteiro, Thibbv}) based on the time-regularity of the sets $Q(t)$.\\
This scheme is a prediction-correction algorithm: predicted position vector $\qqq_k^n  + h \ff(t_k^n,\qqq_k^n )$, that may not be admissible, is projected onto the approximate set of feasible configurations.

\mb Before stating the result of convergence, we introduce some last notations.
We define the piecewise constant function $ \ff^n $ as follows, 
\be{eq:fn} \ff^n(t)=\ff(t_k^n, \qqq_k^n)
\ \textmd{ if } \ t \in
[t_k^n,t_{k+1}^n [,\ k<n \ \textmd{ and } \ \dsp \ff^n(T)=\ff(t_{n-1}^n, \qqq_{n-1}^n).\ee 
We denote by $\qqq^n$ the continuous, piecewise linear function satisfying for $k \in\{ 0, \dots, n\}$, $\qqq^n(t_k^n)= \qqq_k^n .$
To finish, we introduce the functions $\rho $ and $\theta $ defined by $$\rho^n(t)=t_k^n \textmd{ and }\theta^n(t)=t_{k+1}^n
\textmd{ if } t \in [t_k^n, t_{k+1}^n[,\  \rho^n(T)=T \textmd{ and }\theta^n(T)=T. $$

\mb We recall some results about these approximate solutions (see Subsection 3.2 in \cite{jvnm} for details)~:
\begin{thrm}
\label{theo:qhq} Let us suppose that there exists $H_\ff>0$ for all $\dsp \qq\in \bigcup_{s\in [0,T]} Q(s)$ and $t,s\in[0,T]$ 
\be{ass:holder} |\ff(t,\qq)-\ff(s,\qq)| \leq H_\ff |t-s|^{1/2}. \ee
 Then with the assumptions of Theorem~\ref{theo:wp}, $\qqq^n$ tends to $\qqq$
 in $C^0([0,T], \R^d)$, where $t\mapsto \qqq(t)$ is the unique solution
of~(\ref{incldiff2}).
\end{thrm}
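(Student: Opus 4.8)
The plan is to argue by compactness, following the strategy of \cite{jvnm}: establish uniform a priori bounds on the discrete velocities, extract a uniformly convergent subsequence by Arzel\`a--Ascoli, identify its limit as a solution of~(\ref{incldiff2}), and then invoke the uniqueness provided by Theorem~\ref{theo:wp} to upgrade convergence of a subsequence into convergence of the whole sequence.

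First I would derive the key a priori estimate $|\qqq_{k+1}^n - \qqq_k^n| \leq C h$, uniformly in $k$ and $n$. Since $\qqq_{k+1}^n$ is the projection of $\qqq_k^n + h\ff(t_k^n,\qqq_k^n)$ onto the \emph{convex} set $\Qc(t_{k+1}^n,\qqq_k^n)$ and projections onto convex sets are $1$-Lipschitz, one has $|\qqq_{k+1}^n - \qqq_k^n| \leq h|\ff(t_k^n,\qqq_k^n)| + d_{\Qc(t_{k+1}^n,\qqq_k^n)}(\qqq_k^n)$. Because $\qqq_k^n \in Q(t_k^n)$ gives $g_i(t_k^n,\qqq_k^n) \geq 0$ while (\ref{dtg}) controls the time-variation, each linearized constraint defining $\Qc(t_{k+1}^n,\qqq_k^n)$ is violated at $\qqq_k^n$ by at most $\beta h$; for $h$ small the positive-linear-independence condition (\ref{inegtrianginverserho}) provides a Hoffman-type bound which, together with the lower gradient bound in (\ref{gradg}), yields $d_{\Qc(t_{k+1}^n,\qqq_k^n)}(\qqq_k^n) \leq C h$. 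Combined with the linear growth (\ref{lingro}) and a discrete Gr\"onwall argument bounding $|\qqq_k^n|$, this gives the claimed estimate, so that the interpolants $\qqq^n$ are uniformly bounded and equi-Lipschitz on $[0,T]$.

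By Arzel\`a--Ascoli I would then extract a subsequence with $\qqq^n \to \qqq$ uniformly, $\qqq$ Lipschitz, and $\dot\qqq^n \rightharpoonup \dot\qqq$ weakly-$\ast$ in $L^\infty$. Feasibility of the limit, $\qqq(t) \in Q(t)$ for all $t$, follows from $\qqq_k^n \in Q(t_k^n)$, the Lipschitz continuity of $Q$ (Proposition~\ref{Qlip}) and the uniform convergence. It remains to pass to the limit in the inclusion. The optimality condition for the projection reads $\xi^n := \ff^n - \dot\qqq^n \in \NN(\Qc(\theta^n(t),\qqq^n(\rho^n(t))),\qqq^n(\theta^n(t)))$ on each subinterval; by Proposition~\ref{prop:coneprox} this cone is generated by the gradients $\nabla_\qqq g_i(\theta^n(t),\qqq^n(\rho^n(t)))$, which by (\ref{hessg}) and the $O(h)$ velocity bound differ from $\nabla_\qqq g_i(\theta^n(t),\qqq^n(\theta^n(t)))$ by $O(h)$; hence $\xi^n$ is, up to a term tending to $0$, a proximal normal vector to $Q(\theta^n(t))$ at $\qqq^n(\theta^n(t))$. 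I would conclude using the quadratic inequality characterising the $\eta$-prox-regular set $Q(t)$ (Proposition~\ref{Qprox}): for a test value $y \in Q(t)$ one writes $\langle \xi^n, y - \qqq^n(\theta^n(t))\rangle \leq \frac{|\xi^n|}{2\eta}|y - \qqq^n(\theta^n(t))|^2 + o(1)$, integrates against nonnegative weights, and passes to the limit, the Lipschitz regularity of $\ff$ in space together with its H\"older regularity (\ref{ass:holder}) in time ensuring $\ff^n \to \ff(\cdot,\qqq(\cdot))$ and the uniform convergence making the quadratic remainder negligible. This yields $\ff(t,\qqq(t)) - \dot\qqq(t) \in \NN(Q(t),\qqq(t))$ for a.e.\ $t$.

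The step I expect to be the main obstacle is precisely this passage to the limit in the normal-cone inclusion. The normal-cone map of a prox-regular set is only hypomonotone, not monotone, so Minty's trick is unavailable; the argument must instead exploit the quadratic estimate coming from $\eta$-prox-regularity and show that its remainder vanishes in the limit, while simultaneously absorbing the two $O(h)$ discrepancies between the linearized set $\Qc$ and the true set $Q$ --- namely the linearization of the constraints and the evaluation of the gradients at $\qqq_k^n$ rather than $\qqq_{k+1}^n$. Once the limit is shown to solve~(\ref{incldiff2}), uniqueness (Theorem~\ref{theo:wp}) forces every subsequence to converge to the same $\qqq$, hence the full sequence converges in $C^0([0,T],\R^d)$.
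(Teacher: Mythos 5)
Your proposal is correct and follows essentially the same route as the proof this statement actually rests on: Theorem~\ref{theo:qhq} is a \emph{recalled} result, proved in \cite{jvnm} by exactly the compactness strategy you describe --- uniform $O(h)$ bounds on the discrete velocities (the Hoffman-type estimate you sketch from (\ref{dtg}), (\ref{gradg}) and (\ref{inegtrianginverserho}) is the content of Proposition~\ref{prop:properties}, which the paper recalls rather than reproves), Arzel\`a--Ascoli, passage to the limit in the normal-cone inclusion via the quadratic (hypomonotonicity) inequality furnished by the $\eta$-prox-regularity of $Q(t)$, and finally uniqueness from Theorem~\ref{theo:wp} to upgrade subsequential to full-sequence convergence. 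Two harmless remarks: the normal cone to $\Qc(t,\qqq)$ is generated by the active gradients because $\Qc(t,\qqq)$ is polyhedral (standard convex analysis), not by Proposition~\ref{prop:coneprox}, which concerns $Q(t)$; and it is worth knowing that the present paper's own contribution (Theorem~\ref{thm:rateconv} together with Remark~\ref{rem:hilbert}) deliberately replaces this compactness argument by a Cauchy-sequence estimate, which buys both the rate $h^{1/2}$ and validity in infinite-dimensional Hilbert spaces, where your Arzel\`a--Ascoli step would fail.
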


\begin{rmrk} We can replace the definition (\ref{eq:fn}) of $\ff^n$ with
$$ \ff^n(t)= \frac{1}{h} \int_{t_k^n}^{t_{k+1}^n} \ff(s, \qqq_k^n) ds
\ \textmd{ if } \ t \in
[t_k^n,t_{k+1}^n [,\ k<n \ \textmd{ and }\  \dsp \ff^n(T)=\frac{1}{h} \int_{T-h}^{T} \ff(s, \qqq_{n-1}^n) ds.$$
\end{rmrk}

\begin{prpstn} \label{prop:properties}
 There exist $C,D,K>0$ such that
$$ \sup_{n} \|\qqq^n\|_{L^\infty([0,T])} \leq C, \quad \sup_{n} \left\|\frac{d\qqq^n}{dt} \right\|_{L^\infty([0,T])} \leq K $$
and for $n$ large enough,  
\begin{equation} d_{\Qc(t_{k+1}^n, \qqq_k^n)}(\qqq_k^n ) \leq D h.  \label{distQc}
\end{equation}
\end{prpstn}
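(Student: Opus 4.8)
The plan is to prove the three estimates in the order (\ref{distQc}), then the $L^\infty$-bound, and finally the velocity bound, since the last two rest on the distance estimate. Throughout I use that every iterate is feasible: by (\ref{eq:inclusion}) one has $\qqq_{k+1}^n \in \Qc(t_{k+1}^n,\qqq_k^n) \subset Q(t_{k+1}^n)$, so an immediate induction starting from $\qqq_0 \in Q(0)$ gives $\qqq_k^n \in Q(t_k^n)$, i.e. $g_i(t_k^n,\qqq_k^n) \geq 0$ for every $i$ and every $k$.

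For the key estimate (\ref{distQc}) I would evaluate the affine constraints defining $\Qc(t_{k+1}^n,\qqq_k^n)$ at the point $\qqq_k^n$ itself: the $i$-th constraint takes there the value $g_i(t_{k+1}^n,\qqq_k^n)$. Since $g_i(t_k^n,\qqq_k^n)\geq 0$, Assumption (\ref{dtg}) controls the time increment and yields
$$ g_i(t_{k+1}^n,\qqq_k^n) \geq g_i(t_k^n,\qqq_k^n) - \beta h \geq -\beta h. $$
Hence $\qqq_k^n$ violates each affine constraint by at most $\beta h$; dividing by the gradient norm, bounded below by $\alpha$ through (\ref{gradg}), the distance from $\qqq_k^n$ to each individual half-space is at most $\beta h/\alpha$. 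It then remains to pass from the distances to the $p$ half-spaces to the distance to their intersection, which is exactly where a metric qualification (linear regularity) condition with a constant $\kappa$ governed by $\gamma$ enters, giving $d_{\Qc(t_{k+1}^n,\qqq_k^n)}(\qqq_k^n) \leq \kappa\,\beta h/\alpha =: Dh$. Only the constraints actually violated at $\qqq_k^n$ matter, and these satisfy $g_i(t_k^n,\qqq_k^n) < \beta h < \rho$ for $h$ small, so they lie in the $\rho$-active set and Assumption (\ref{inegtrianginverserho}) applies (transported from $t_k^n$ to $t_{k+1}^n$ via (\ref{dtgradg}), at the cost of slightly enlarging $\gamma$).

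Granting (\ref{distQc}), I estimate the one-step displacement by combining the non-expansiveness of the projection with the $1$-Lipschitz continuity of $d_{\Qc(t_{k+1}^n,\qqq_k^n)}$: writing $\tilde Q := \Qc(t_{k+1}^n,\qqq_k^n)$ and $\ff_k := \ff(t_k^n,\qqq_k^n)$,
$$ |\qqq_{k+1}^n - \qqq_k^n| \leq d_{\tilde Q}(\qqq_k^n + h\ff_k) + h|\ff_k| \leq d_{\tilde Q}(\qqq_k^n) + 2h|\ff_k| \leq Dh + 2h|\ff_k|. $$
Inserting the linear growth bound (\ref{lingro}) turns this into the affine recursion
$$ |\qqq_{k+1}^n| \leq (1 + 2hL_\ff)|\qqq_k^n| + (D + 2L_\ff)h, $$
and a discrete Gronwall lemma then bounds $|\qqq_k^n|$ by $e^{2L_\ff T}(|\qqq_0| + \text{const})$ uniformly in $k$ and $n$, which is the first estimate for a suitable $C$. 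Feeding this bound back into the displacement estimate gives $|\qqq_{k+1}^n - \qqq_k^n| \leq (D + 2L_\ff(1+C))h$, i.e. $|d\qqq^n/dt| \leq K$ on each subinterval, hence the velocity bound.

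The routine parts are the projection inequalities and the Gronwall argument; the genuine difficulty is the metric qualification step inside (\ref{distQc}), namely converting per-constraint violations of size $O(h)$ into a distance-to-intersection of the same size, with a constant uniform in $t$, $k$ and $n$. This is precisely the linear regularity of the family of half-spaces, whose validity (with a constant controlled by $\gamma$) is the object of Section~\ref{sec:mqc}; the only extra care is that the gradients entering $\Qc$ are taken at time $t_{k+1}^n$ whereas feasibility holds at $t_k^n$, a discrepancy absorbed by (\ref{dtgradg}) for $h$ small.
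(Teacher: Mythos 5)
Your reduction of the $L^\infty$ and velocity bounds to (\ref{distQc}) is sound: granting that estimate, the chain $|\qqq_{k+1}^n-\qqq_k^n|\leq d_{\Qc(t_{k+1}^n,\qqq_k^n)}(\qqq_k^n+h\ff(t_k^n,\qqq_k^n))+h|\ff(t_k^n,\qqq_k^n)|\leq Dh+2h|\ff(t_k^n,\qqq_k^n)|$, combined with (\ref{lingro}) and a discrete Gronwall lemma, does the job. The genuine gap is in your proof of (\ref{distQc}) itself: you invoke the linear regularity of Section~\ref{sec:mqc} to pass from the $p$ half-space distances (each correctly bounded by $\beta h/\alpha$ via (\ref{dtg}) and (\ref{gradg})) to the distance to their intersection, but Theorem~\ref{thm:condqualif} carries the hypothesis $d_{\Qc(t,\tilde{\qq})}(\tilde{\qq})\leq r/4$ --- an a priori bound on the very quantity you are estimating. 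This hypothesis is not decorative: the Ioffe-type Lemma~\ref{lem:ioffe} underlying the theorem needs a base point $x_0$ with $f(x_0)=0$, i.e.\ a point of the intersection $\Qc(t,\tilde{\qq})$ near $\tilde{\qq}$, and without one even the nonemptiness of $\Qc(t_{k+1}^n,\qqq_k^n)$ is unproven (small per-constraint violations say nothing about the intersection absent qualification). Indeed, in the paper's architecture the dependency runs the other way: inside the proof of Theorem~\ref{thm:rateconv}, inequality (\ref{distQc}) is used precisely to \emph{verify} the hypothesis of Theorem~\ref{thm:condqualif}. As written, your argument is circular.

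The paper does not reprove this proposition (it is recalled from Subsection 3.2 of \cite{jvnm}), and the proof there bypasses regularity of the intersection by a direct construction from (\ref{inegtrianginverserho}): for nonnegative $\lambda_i$ supported on the $\rho$-active set, (\ref{inegtrianginverserho}) together with (\ref{gradg}) gives $\left|\sum\lambda_i\nabla_\qqq g_i\right|\geq(\alpha/\gamma)\sum\lambda_i$, so the minimal-norm element of the convex hull of the $\rho$-active gradients has norm at least $\alpha/\gamma$; normalizing it yields a unit vector $v$ with $\ll\nabla_\qqq g_i(t_{k+1}^n,\qqq_k^n),v\rr\geq\alpha/\gamma-O(h)$ for all $i\in I_\rho$ (the $O(h)$ correction coming from (\ref{dtgradg}), as you noted). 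Then $\qqq_k^n+sv$ with $s$ of order $\gamma\beta h/\alpha$ satisfies every affine constraint defining $\Qc(t_{k+1}^n,\qqq_k^n)$: constraints violated by at most $\beta h$ increase along $v$ at rate at least $\alpha/(2\gamma)$, while constraints with $g_i(t_k^n,\qqq_k^n)>\rho$ lose at most $\beta s$ and remain positive for $h$ small. This exhibits an explicit feasible point at distance $O(h)$ from $\qqq_k^n$, proving nonemptiness and (\ref{distQc}) in one stroke with $D$ of order $\gamma\beta/\alpha$ --- exactly the step your linear-regularity black box presupposes. To repair your proof, replace the appeal to Theorem~\ref{thm:condqualif} by this interior-direction construction (or some other independent production of a nearby point of $\Qc(t_{k+1}^n,\qqq_k^n)$); the rest of your argument then stands.
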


\mb We now come to the main result of the present paper which specifies the convergence order of the previous scheme.

\begin{thrm} \label{thm:rateconv}
Under (\ref{ass:holder}), there exists a constant $C_0>0$ such that for $n$ large enough
$$ \|\qqq^n-\qqq\|_{L^\infty([0,T])} \leq C_0\left(\frac{T}{n}\right)^{1/2},$$
where $\qqq$ is the solution of (\ref{eq:incldiff}).
\end{thrm}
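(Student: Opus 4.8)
The plan is to run a Gronwall-type energy estimate on the squared error $t \mapsto |\qqq^n(t) - \qqq(t)|^2$, the essential difficulty being the control of the two normal-cone contributions, which live on different sets ($\Qc$ and $Q$) and at different base points.

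First I would rewrite both velocities as a perturbation minus a normal-cone element. The variational characterization of the projection in (\ref{schema}) gives, on each interval $(t_k^n, t_{k+1}^n)$,
$$\frac{d\qqq^n}{dt}(t) = \ff(t_k^n, \qqq_k^n) - \nu_k^n, \qquad \nu_k^n := \frac{1}{h}\bigl(\qqq_k^n + h\ff(t_k^n,\qqq_k^n) - \qqq_{k+1}^n\bigr) \in \NN\bigl(\Qc(t_{k+1}^n,\qqq_k^n),\qqq_{k+1}^n\bigr),$$
while the exact solution satisfies $\dot\qqq(t) = \ff(t,\qqq(t)) - w(t)$ with $w(t) \in \NN(Q(t),\qqq(t))$. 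Here $|\nu_k^n|$ is bounded by Proposition~\ref{prop:properties} and the linear-growth bound (\ref{lingro}), and $\|w\|_{L^\infty}$ is bounded since $|\dot\qqq(t)| \leq |\ff(t,\qqq(t))|$. Writing $e(t) := \qqq^n(t) - \qqq(t)$ and differentiating, I get almost everywhere
$$\frac{1}{2}\frac{d}{dt}|e(t)|^2 = \langle \ff(t_k^n,\qqq_k^n) - \ff(t,\qqq(t)),\, e(t)\rangle + \langle w(t),\, e(t)\rangle - \langle \nu_k^n,\, e(t)\rangle,$$
and the whole proof reduces to integrating this identity over $[0,T']$ and bounding each term.

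The first (perturbation) term is routine: using the Lipschitz bound (\ref{lip}) in space, the H\"older bound (\ref{ass:holder}) in time, and $\|\frac{d\qqq^n}{dt}\|_{L^\infty} \leq K$ from Proposition~\ref{prop:properties}, it is dominated by $C(|e(t)|^2 + h^{1/2}|e(t)|)$, whose time integral is $C\int|e|^2 + Ch$ after a Young inequality. The term $\langle w, e\rangle$ is controlled by the uniform prox-regularity of $Q(t)$ (Proposition~\ref{Qprox}): I would replace $\qqq^n(t)$ by $\qqq_{k+1}^n \in Q(t_{k+1}^n)$ up to an $O(h)$ error, project $\qqq_{k+1}^n$ onto $Q(t)$ using the Hausdorff-Lipschitz continuity of $Q$ (Proposition~\ref{Qlip}, so that $d_{Q(t)}(\qqq_{k+1}^n) \leq K_L h$), and apply the subsmoothness inequality $\langle w, z - \qqq(t)\rangle \leq \frac{|w|}{2\eta}|z-\qqq(t)|^2$ valid for $z \in Q(t)$. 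This produces a quadratic term $\leq C|e|^2$ plus a genuinely $O(h)$ defect $|w|K_L h$; integrated over $[0,T]$ the latter contributes only $O(h)$ to the energy, which is exactly what yields the $h^{1/2}$ rate (it is the moving-set error, as in Moreau's catching-up scheme).

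The main obstacle is the term $-\langle \nu_k^n, e\rangle$, because $\nu_k^n$ is normal to the convex surrogate $\Qc(t_{k+1}^n,\qqq_k^n)$ at $\qqq_{k+1}^n$, whereas $\qqq(t)$ need not belong to $\Qc$. Reducing to $-\langle\nu_k^n, \qqq_{k+1}^n - \qqq(t_{k+1}^n)\rangle$ up to $O(h)$ errors and using $\langle \nu_k^n, \tilde\qqq - \qqq_{k+1}^n\rangle \leq 0$ for every $\tilde\qqq \in \Qc$ (convexity), everything hinges on bounding the distance $d_{\Qc(t_{k+1}^n,\qqq_k^n)}(\qqq(t_{k+1}^n))$. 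This is where the metric qualification condition enters: a second-order Taylor expansion with the Hessian bound (\ref{hessg}), together with convexity of the $g_i(t,\cdot)$, shows that each linearized constraint is violated by $\qqq(t_{k+1}^n)$ by at most $O(h^2 + |e|^2)$, and the positive-linear independence of the active gradients encoded in (\ref{inegtrianginverse}) then furnishes a Hoffman-type error bound, namely that $d_{\Qc(t_{k+1}^n,\qqq_k^n)}(\qqq(t_{k+1}^n))$ is at most $\frac{\gamma}{\alpha}$ times the maximal linearized-constraint violation, hence $\leq C(h^2 + |e|^2)$ with a constant uniform in $t$ and $k$. Multiplying by the bounded $|\nu_k^n|$ gives $-\langle\nu_k^n, e\rangle \leq C(h^2 + |e|^2)$, whose integral is $O(h^2) + C\int|e|^2$. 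Assembling the three bounds and using $e(0)=0$ yields
$$\tfrac{1}{2}|e(T')|^2 \leq C_1\int_0^{T'} |e(t)|^2\,dt + C_2 h \qquad \text{for all } T' \in [0,T],$$
and the Gronwall lemma gives $|e(T')|^2 \leq C_3 h$ uniformly in $T'$, that is $\|\qqq^n - \qqq\|_{L^\infty([0,T])} \leq C_0 h^{1/2} = C_0 (T/n)^{1/2}$. The delicate point throughout is to check that the constants in the error bound and in the prox-regularity inequality remain uniform in $(t,k)$ when $Q(t)$ is replaced by the moving convex approximations $\Qc$, which is precisely the content of the metric qualification analysis.
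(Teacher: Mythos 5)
Your route is genuinely different from the paper's. The paper never compares $\qqq^n$ with the exact solution: it proves that $(\qqq^n)_n$ is a Cauchy sequence in $L^\infty$ by testing the discrete variational inequality (\ref{eq:ineg}) of one discrete solution against another discrete solution $\qqq^m$, and then lets $m\to\infty$; this uses only properties of the scheme (Propositions \ref{prop:properties}, \ref{Qlip}, \ref{prop:d1cont} and Theorem \ref{thm:condqualif}) and is exactly what permits the extension to Hilbert spaces claimed in Remark \ref{rem:hilbert}, since no compactness and no a priori regularity of $\qqq$ is invoked. You instead run a direct energy estimate on $e=\qqq^n-\qqq$, which forces you to use two facts about the exact solution: a bounded normal selection $w(t)\in\NN(Q(t),\qqq(t))$, and the hypomonotonicity inequality $\ll w,z-\qqq(t)\rr\leq \frac{|w|}{2\eta}|z-\qqq(t)|^2$ for $z\in Q(t)$ coming from the $\eta$-prox-regularity of $Q(t)$ (Proposition \ref{Qprox}). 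Your treatment of the three terms is otherwise sound, notably the reduction of the $\nu_k^n$-term to $d_{\Qc(t_{k+1}^n,\qqq_k^n)}(\qqq(t_{k+1}^n))$ and its quadratic estimate, which is precisely where Proposition \ref{prop:d1cont} and Theorem \ref{thm:condqualif} combine (the paper's constant $\kappa=\Theta pM/(2\alpha)$). What your approach buys is a more classical, Moreau-style error analysis; what the paper's buys is independence from the regularity theory of the continuous solution and from finite dimension.

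Two points need repair. First, your justification of $\|w\|_{L^\infty}<\infty$, namely ``since $|\dot\qqq(t)|\leq|\ff(t,\qqq(t))|$'', is false for moving sets: the motion of $Q(t)$ contributes, and the correct a.e.\ bound has the form $|\dot\qqq(t)-\ff(t,\qqq(t))|\leq K_L+|\ff(t,\qqq(t))|$, obtained from the Lipschitz variation of $Q$ (Proposition \ref{Qlip}) together with prox-regularity, as in Edmond--Thibault \cite{Thibbv}; the paper's Theorem \ref{theo:wp} as stated gives only absolute continuity, so you must import or reprove this estimate. Second, and more seriously, your Hoffman-type bound ``with a constant uniform in $t$ and $k$'' is not available globally: Theorem \ref{thm:condqualif} is local --- it requires $\qq\in B(\tilde{\qq},r/4)$ and $d_{\Qc(t,\tilde{\qq})}(\tilde{\qq})\leq r/4$ --- and this locality is essential, because Assumption (\ref{inegtrianginverserho}) controls the positive-linear independence of the gradients only at points of $Q(t)$ and only for $\rho$-active indices; in the proof of Theorem \ref{thm:condqualif}, the inclusion $J(t,\qq)\subset I_\rho(t,\qq_1)$ is derived precisely from $|\qq-\tilde{\qq}|\leq 2r$, and no uniform error-bound constant can be extracted from the hypotheses for points far from the feasible set. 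Hence your estimate $d_{\Qc(t_{k+1}^n,\qqq_k^n)}(\qqq(t_{k+1}^n))\leq C(h^2+|e|^2)$ is justified only when $|e(t)|\lesssim r$. The fix is exactly the paper's two-case split: when $|e(t)|\geq r/8$, discard the geometry and use $-\ll\nu_k^n,e\rr\leq C_1|e|\leq (8C_1/r)|e|^2$, which Gronwall absorbs just as well. With these two patches your argument closes and yields the stated $h^{1/2}$ rate.
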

 
\begin{proof} We check that the sequence $(\qqq^n)_n$ is of Cauchy type. \\ Let $m\geq n$ be large enough. 
Since for $k\in\{0,..,n-1\}$ 
$$ \qqq_{k+1}^n  = \P_{\Qc( t_{k+1}^n ,\qqq_k^n)} (\qqq_k^n  + h \ff(t_k^n, \qqq_k^n ))$$
and $\Qc( t_{k+1}^n ,\qqq_k^n)$ is a closed convex set, it comes~: for all $y\in \R^{d}$

\be{eq:ineg} \ll \qqq_k^n  + h \ff(t_k^n, \qqq_k^n )-\qqq_{k+1}^n,y-\qqq_{k+1}^n\rr \leq |\qqq_k^n  + h \ff(t_k^n, \qqq_k^n )-\qqq_{k+1}^n| d_{\Qc( t_{k+1}^n ,\qqq_k^n)}(y). \ee
By Assumption (\ref{lingro}) and Proposition \ref{prop:properties}, we get for almost $t\in [0,T]$, 
\be{eq:ineg2} \left|\ff^n(t) - \frac{d\qqq^n}{dt}(t)\right|\leq L_\ff(1+C) + K. \ee
Consequently by dividing (\ref{eq:ineg}) by $h$, we obtain for all $y\in \R^d$
\be{toto} -\left\ll \frac{d\qqq^n}{dt}(t) - \ff^n(t),y-\qqq^n(\theta^n(t)) \right\rr \leq C_1 d_{\Qc( \theta^n(t) ,\qqq^n(\rho^n(t))}(y)\ee
with $C_1:=L_f(1+C) + K$. \\
Taking $y=\qqq^m(\theta^m(t))$, it follows
$$ - \left\ll \frac{d\qqq^n}{dt}(t) - \ff^n(t),\qqq^m(\theta^m(t))-\qqq^n(\theta^n(t)) \right\rr \leq C_1 d_{\Qc( \theta^n(t) ,\qqq^n(\rho^n(t))}(\qqq^m(\theta^m(t))).$$

\mb {\bf First case} : $|\qqq^m(\theta^m(t))-\qqq^n(\rho^n(t))|\leq r/8$ (with $r$ later introduced in Theorem \ref{thm:condqualif}). \\
Let us choose $w \in \P_{Q( \theta^n(t))} (\qqq^m(\theta^m(t))) $. Hence $w\in Q( \theta^n(t))$ and 
$$ |w-\qqq^n(\rho^n(t))|\leq |w-\qqq^m(\theta^m(t))| + \frac{r}{8} \leq d_H(Q(\theta^n(t)),Q(\theta^m(t))) + \frac{r}{8} \leq \frac{K_LT}{n}+\frac{r}{8}< \frac{r}{4}$$
by Proposition \ref{Qlip}, for $n$ large enough. Moreover from (\ref{eq:inclusion}) it comes $\qqq^n(\rho^n(t)) \in Q(\rho^n(t)) \subset U(\theta^n(t))$ for $n>T K_L/c$  and then Inequality (\ref{distQc}) implies that
$$ d_{\Qc( \theta^n(t) ,\qqq^n(\rho^n(t))}(\qqq^n(\rho^n(t))) \leq \frac{r}{4}$$
for $n$ large enough. Then, by Theorem \ref{thm:condqualif} and Proposition \ref{prop:d1cont}, we deduce
$$ d_{\Qc( \theta^n(t) ,\qqq^n(\rho^n(t))}(w) \leq \kappa |\qqq^n(\rho^n(t))-w|^2$$
where $\kappa:=\Theta p M/(2\alpha)$. Hence with Propositions \ref{prop:properties} and \ref{Qlip}
\begin{align*}
d_{\Qc( \theta^n(t) ,\qqq^n(\rho^n(t)))}(\qqq^m(\theta^m(t))) & \leq \kappa |\qqq^n(\rho^n(t))-w|^2 + |\qqq^m(\theta^m(t))-w| \\
 & \leq \kappa |\qqq^n(\rho^n(t))-w|^2 + d_H(Q( \theta^n(t)), Q( \theta^m(t) )).
\end{align*}
Since 
\begin{align*}
|\qqq^n(\rho^n(t))-w| & \leq |\qqq^n(\rho^n(t))-\qqq^n(t)| + |\qqq^n(t)-\qqq^m(t)| +|\qqq^m(t)-\qqq^m(\theta^m(t))| + |\qqq^m(\theta^m(t))-w| \\
 & \leq \frac{KT}{n} + |\qqq^n(t)-\qqq^m(t)| + \frac{KT}{m} + \frac{K_L T}{m} \\
 & \leq  |\qqq^n(t)-\qqq^m(t)| + 2\frac{KT}{n} + \frac{K_L T}{n},
\end{align*}
we get
\begin{align*}
d_{\Qc( \theta^n(t) ,\qqq^n(\rho^n(t)))}(\qqq^m(\theta^m(t))) \leq 2\kappa |\qqq^n(t)-\qqq^m(t)|^2 +2\kappa \left(\frac{2KT+K_LT}{n}\right)^2 + \frac{K_LT}{n}.
\end{align*}
Finally,
$$ - \left\ll \frac{d\qqq^n}{dt}(t) - \ff^n(t),\qqq^m(\theta^m(t))-\qqq^n(\theta^n(t)) \right\rr \leq 2 C_1 \kappa |\qqq^n(t)-\qqq^m(t)|^2 + \frac{C_2}{n},$$
with $C_2:=C_1(K_LT+2\kappa T^2(2K+K_L)^2)$. 

\mb {\bf Second case} : $|\qqq^m(\theta^m(t))-\qqq^n(\rho^n(t))|\geq r/8$. \\
Then by (\ref{toto}), 
\begin{align*} 
- \left\ll \frac{d\qqq^n}{dt}(t) - \ff^n(t),\qqq^m(\theta^m(t))-\qqq^n(\theta^n(t)) \right\rr & \leq C_1 |\qqq^m(\theta^m(t))-\qqq^n(\theta^n(t))| \\
& \leq C_1 |\qqq^m(\theta^m(t))-\qqq^n(\rho^n(t))| + C_1\frac{KT}{n} \\
& \hspace{-2cm} \leq  \frac{8}{r}C_1 |\qqq^m(\theta^m(t))-\qqq^n(\rho^n(t))|^2+C_1\frac{KT}{n} \\
 & \hspace{-2cm} \leq \frac{16}{r} C_1  |\qqq^m(t)-\qqq^n(t)|^2 + \frac{64}{r}C_1\left(\frac{KT}{n}\right)^2 + C_1\frac{KT}{n}.
\end{align*}

\mb {\bf End of the proof} : 

\mb
By setting $C_3:=\max\{2C_1\kappa,16C_1/r\}$ and $C_4:=\max\{C_2,\frac{64C_1(KT)^2}{r}+C_1KT\}$, we get
$$ - \left\ll \frac{d\qqq^n}{dt} (t) - \ff^n(t),\qqq^m(\theta^m(t))-\qqq^n(\theta^n(t)) \right\rr \leq C_3 |\qqq^m(t)-\qqq^n(t)|^2 + \frac{C_4}{n}. $$
Therefore
$$ - \left\ll \frac{d\qqq^n}{dt} (t) - \ff^n(t),\qqq^m(t)-\qqq^n(t) \right\rr \leq C_3 |\qqq^m(t)-\qqq^n(t)|^2 + \frac{C_5}{n}, $$
with $C_5:=C_4+2C_1 KT$.
By summing the previous inequality and the other one obtained by changing the role of $n$ and $m$, it yields
$$ \left\ll \frac{d\qqq^m}{dt}(t) - \frac{d\qqq^n}{dt}(t),\qqq^m(t)-\qqq^n(t) \right\rr \leq 2C_3 |\qqq^m(t)-\qqq^n(t)|^2 + \frac{2C_5}{n}+|\ff^n(t)-\ff^m(t)||\qqq^m(t)-\qqq^n(t)|.$$
Furthermore by (\ref{lip}) and (\ref{ass:holder})
\begin{align*} 
|\ff^n(t)-\ff^m(t)| & \leq |\ff(\rho^m(t),\qqq^m(\rho^m(t)))-\ff(\rho^m(t),\qqq^n(\rho^n(t)))| + |\ff(\rho^m(t),\qqq^n(\rho^n(t)))-\ff(\rho^n(t),\qqq^n(\rho^n(t)))| \\
 & \leq K_\ff|\qqq^m(\rho^m(t))-\qqq^n(\rho^n(t))|+H_\ff |\rho^n(t)-\rho^m(t)|^{\frac{1}{2}} \\
 & \leq K_\ff|\qqq^m(t)-\qqq^n(t)| + 2K_\ff\frac{KT}{n} + H_\ff |\rho^n(t)-\rho^m(t)|^\frac{1}{2},
\end{align*}
and so
\begin{align*}
 |\ff^n(t)-\ff^m(t)||\qqq^m(t)-\qqq^n(t)| & \leq K_\ff|\qqq^m(t)-\qqq^n(t)|^2 + \left(2K_\ff\frac{KT}{n} + H_\ff \left(\frac{T}{n}\right)^{\frac{1}{2}}\right)|\qqq^m(t)-\qqq^n(t)| \\
 & \leq (\frac{1}{2}+K_\ff)|\qqq^m(t)-\qqq^n(t)|^2 + \frac{1}{2} \left(2K_\ff KT + H_\ff \sqrt{T}\right)^2 \frac{1}{n}. 
\end{align*}
Finally,
$$ \left\ll \frac{d\qqq^m}{dt}(t) - \frac{d\qqq^n}{dt}(t),\qqq^m(t)-\qqq^n(t) \right\rr \leq (2C_3+\frac{1}{2}+K_\ff)|\qqq^m(t)-\qqq^n(t)|^2 + \frac{C_6}{n},$$
with $C_6:=2C_5 + \frac{1}{2}\left(2K_\ff KT + H_\ff \sqrt{T}\right)^2$. \\
By applying Gronwall's Lemma, we have
$$ \|\qqq^m-\qqq^n\|_{L^\infty([0,T])}\leq \left(\frac{2C_6}{n}\right)^{1/2} \exp((2C_3 +\frac{1}{2}+ K_\ff)T).$$
Then, we conclude the proof by taking the limit for $m\to \infty$.
\end{proof}

\begin{rmrk} \label{rem:hilbert} This proof allows us to get around the compactness arguments employed in \cite{jvnm} to obtain the convergence of $\qqq_h$. Consequently, this result can be extended to the Hilbertian case. Then it can be checked that the limit satisfies the differential inclusion (\ref{eq:incldiff}) by following the same reasoning as in \cite{jvnm}. 
\end{rmrk}

\mb It remains to prove Proposition \ref{prop:d1cont} and Theorem \ref{thm:condqualif}. We now check the first result whereas the second one will be established in the next section.

\begin{prpstn} \label{prop:d1cont}
For $i\in\{1,...,p\}$ and $\qqq\in U(t)$ we set
$$ \Qc_i(t,\qqq):= \{ \tilde \qqq \in \R^d \virg g_{i}(t,\qqq) + \langle \nabla_\qqq \, g_{i}(t,\qqq), 
\tilde \qqq - \qqq \rangle \geq 0 \}.$$
Then, for all $t\in[0,T]$, all $\qq \in Q(t)$ and all $\tilde{\qq} \in U(t)$, we have for all $i\in\{1,...,p\}$
\begin{equation} d_{\Qc_i(t,\tilde{\qq})}(\qqq) \leq \frac{M}{2\alpha} |\qqq-\tilde{\qq}|^2. \label{eq:dist} \end{equation}
\end{prpstn}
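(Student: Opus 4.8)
The plan is to exploit the fact that $\Qc_i(t,\tilde{\qq})$ is nothing but a half-space, so that the distance on the left-hand side of \eqref{eq:dist} has a closed form. Write $\ell(\xx):=g_i(t,\tilde{\qq})+\langle \nabla_\qqq \, g_i(t,\tilde{\qq}),\xx-\tilde{\qq}\rangle$ for the affine function defining $\Qc_i(t,\tilde{\qq})=\{\ell\geq 0\}$. Since $\tilde{\qq}\in U(t)\subset U_i(t)$, assumption \eqref{gradg} gives $|\nabla_\qqq \, g_i(t,\tilde{\qq})|\geq\alpha>0$, so $\ell$ is non-constant and the Euclidean projection onto the half-space is explicit. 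If $\ell(\qqq)\geq 0$ then $\qqq$ already belongs to $\Qc_i(t,\tilde{\qq})$ and the left-hand side of \eqref{eq:dist} vanishes; otherwise
$$ d_{\Qc_i(t,\tilde{\qq})}(\qqq)=\frac{-\ell(\qqq)}{|\nabla_\qqq \, g_i(t,\tilde{\qq})|}\leq\frac{\big(-\ell(\qqq)\big)_+}{\alpha}. $$
So the whole matter reduces to bounding $-\ell(\qqq)=-g_i(t,\tilde{\qq})-\langle\nabla_\qqq \, g_i(t,\tilde{\qq}),\qqq-\tilde{\qq}\rangle$ from above by $\tfrac{M}{2}|\qqq-\tilde{\qq}|^2$.

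For that bound I would compare $\ell(\qqq)$ with the true value $g_i(t,\qqq)$ through a second-order Taylor expansion of $g_i(t,\cdot)$ about $\tilde{\qq}$: there exists $\xx$ on the segment $[\tilde{\qq},\qqq]$ with
$$ g_i(t,\qqq)=g_i(t,\tilde{\qq})+\langle\nabla_\qqq \, g_i(t,\tilde{\qq}),\qqq-\tilde{\qq}\rangle+\tfrac{1}{2}\langle \mathrm{D}_\qqq^2 g_i(t,\xx)(\qqq-\tilde{\qq}),\qqq-\tilde{\qq}\rangle, $$
whence $\ell(\qqq)=g_i(t,\qqq)-\tfrac12\langle \mathrm{D}_\qqq^2 g_i(t,\xx)(\qqq-\tilde{\qq}),\qqq-\tilde{\qq}\rangle$. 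Because $\qqq\in Q(t)\subset Q_i(t)$ we have $g_i(t,\qqq)\geq 0$, and the Hessian bound \eqref{hessg} gives $|\langle \mathrm{D}_\qqq^2 g_i(t,\xx)(\qqq-\tilde{\qq}),\qqq-\tilde{\qq}\rangle|\leq M|\qqq-\tilde{\qq}|^2$. Combining the two,
$$ -\ell(\qqq)=-g_i(t,\qqq)+\tfrac12\langle \mathrm{D}_\qqq^2 g_i(t,\xx)(\qqq-\tilde{\qq}),\qqq-\tilde{\qq}\rangle\leq \tfrac{M}{2}|\qqq-\tilde{\qq}|^2, $$
and substituting into the distance formula yields exactly $d_{\Qc_i(t,\tilde{\qq})}(\qqq)\leq \tfrac{M}{2\alpha}|\qqq-\tilde{\qq}|^2$. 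It is worth noting that convexity of $g_i$ plays no role in this upper bound (it only ensures that $\Qc_i$ is an inner approximation of $Q_i$); the sign condition $g_i(t,\qqq)\geq 0$ and the $C^2$ estimate do all the work.

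The point requiring genuine care — and the one I expect to be the only real obstacle — is the legitimacy of the Taylor expansion, since the intermediate point $\xx$ must lie where $g_i(t,\cdot)$ is of class $C^2$, namely in $U_i(t)$, which forces the entire segment $[\tilde{\qq},\qqq]$ to remain in $U_i(t)$. This is ensured by the geometry encoded in \eqref{Ui}: as $\qqq\in Q_i(t)$ and $U_i(t)$ contains a $c$-neighbourhood of $Q_i(t)$, every point within distance $c$ of $\qqq$ lies in $U_i(t)$, so the segment is admissible as soon as $|\qqq-\tilde{\qq}|<c$. This is precisely the regime in which \eqref{eq:dist} is invoked in the proof of Theorem~\ref{thm:rateconv}, where $\qqq$ and $\tilde{\qq}$ are taken at mutual distance at most $r/4$; one may always arrange $r$ small enough that $r/4<c$. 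In all cases the analytic heart of the argument is the short computation of the two paragraphs above.
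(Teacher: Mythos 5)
Your proof is correct and takes essentially the same route as the paper's: the paper likewise exploits that $\Qc_i(t,\tilde{\qq})$ is a half-space, moving from $\qqq$ along the direction $\nabla_\qqq \, g_i(t,\tilde{\qq})$ to get exactly your explicit distance $-\bigl(g_i(t,\tilde{\qq})+\ll \nabla_\qqq \, g_i(t,\tilde{\qq}),\qqq-\tilde{\qq}\rr\bigr)/|\nabla_\qqq \, g_i(t,\tilde{\qq})|$, and then bounds this by a second-order Taylor expansion combined with $g_i(t,\qqq)\geq 0$ and Assumptions (\ref{gradg}), (\ref{hessg}), the only cosmetic difference being the integral rather than the Lagrange form of the remainder. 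Your final paragraph actually adds a point the paper leaves implicit: the paper Taylor-expands along the segment $[\tilde{\qq},\qqq]$ without verifying it stays in $U_i(t)$ where the $C^2$ bound (\ref{hessg}) is available, so your use of (\ref{Ui}) to justify this when $|\qqq-\tilde{\qq}|<c$ (the regime in which the proposition is invoked, shrinking $r$ if needed) is a legitimate refinement rather than a deviation.
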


\begin{proof}
Let consider $i\in\{1,...,p\}$, $\tilde{\qq}\in U(t)$ and $ \qqq\in Q(t) \subset Q_i(t)$. We assume that $\qqq \notin \Qc_i(t,\tilde{\qq})$ (otherwise (\ref{eq:dist}) obviously holds). \\
For $\ell \geq 0$, we define
$$ z(\ell):= \qqq + \ell \nabla g_i(t,\tilde{\qq}).$$
The point $z(\ell)$ belongs to $\Qc_i(t,\tilde{\qq})$ if and only if 
$$ g_i(t,\tilde{\qq})+\ll \nabla g_i(t,\tilde{\qq}), \qqq-\tilde{\qq}\rr + \ell |\nabla g_i(t,\tilde{\qq})|^2 \geq 0,$$
which is equivalent to
$$ \ell\geq \ell_0:= -\frac{g_i(t,\tilde{\qq})+\ll \nabla g_i(t,\tilde{\qq}), \qqq-\tilde{\qq}\rr}{|\nabla g_i(t,\tilde{\qq})|^2} \geq 0.$$
Thus, 
\begin{align*}
 d_{\Qc_i(t,\tilde{\qq})}(\qqq) & \leq |\qqq-z(\ell_0)| \leq \ell_0 |\nabla g_i(t,\tilde{\qq})| \\
 & \leq -\frac{g_i(t,\tilde{\qq})+\ll \nabla g_i(t,\tilde{\qq}), \qqq-\tilde{\qq}\rr}{|\nabla g_i(t,\tilde{\qq})|} \\
 & \leq \frac{1}{|\nabla g_i(t,\tilde{\qq})|} \int_0^1 s \mathrm{D}_\qqq^2 g_i(t,\tilde{\qq}+s(\qqq-\tilde{\qq}))(\qqq-\tilde{\qq},\qqq-\tilde{\qq}) ds,
\end{align*}
because $g_i(t,\qqq)\geq 0$. We conclude to (\ref{eq:dist}) by Assumptions (\ref{gradg}) and (\ref{hessg}).
\end{proof}

\section{Metric qualification condition}
\label{sec:mqc}

This section is devoted to the proof of Theorem \ref{thm:condqualif}, which corresponds to a metric qualification condition for the sets $\Qc_i$. Aiming that, we recall some notions of subdifferential calculus.

\begin{dfntn}[proximal subdifferential]
 \label{def:gprox}
Let $f : \R^d \rightarrow \overline{\R}$ be a lower semicontinuous function which is finite at $x \in \R^d$.
The proximal subdifferential of $f$ at $x$ is defined by 
$$\partial^P f(x):=\left\{ x^\star \in \R^d, \ \exists \alpha,\beta>0 ,\ \forall |h|\leq \beta, \ f(x+h)-f(x) \geq \ll x^\star , h \rr -\alpha |h|^2\right\}. $$ 
\end{dfntn}

\begin{dfntn}[limiting subdifferential]
 \label{def:glimiting}
Let $f : \R^d \rightarrow \overline{\R}$ be a lower semicontinuous function which is finite at $x \in \R^d$.
The limiting (or Mordukhovich) subdifferential of $f$ at $x$ is defined by 
$$\partial^L f (x):=\left\{ x^\star \in \R^d, \ x^\star =\lim_{k\to \infty}  x_k^\star \quad \textrm{with} \quad  x_k^\star \in \partial^Pf(x_k), \ x_k \to x \quad \textrm{and} \quad  \ f(x_k) \to f(x)  \right\}. $$ 
\end{dfntn}

\begin{dfntn}[Clarke subdifferential]
 \label{def:gclarke}
Let $f : \R^d \rightarrow \overline{\R}$ be a Lipschitz continuous function.
The Clarke subdifferential $\partial^C f(x)$ of $f$ at $x$ can be defined (see \cite{Clarke}) as the closed convex hull of the limiting subdifferential~: 
$$\partial^C f(x):= \overline{ \textrm{conv}\  \partial^L f (x) }. $$
\end{dfntn}

\mb This notion has been extended for less regular functions, we refer the reader to \cite{Rockafellar} for details.

\mb The following property is a special case of the exact sum rule for the Clarke subdifferential (see Theorem 2 of \cite{Rockafellar}):

\begin{lmm}[Optimality property] \label{lem:opt} Let $f:\R^d \rightarrow \R\cup \{+\infty\}$ be a lower semicontinuous function and $\phi:\R^d \rightarrow \R$ a convex Lipschitz function. If $x\in\R^d$ is a finite local minimum of $f+\phi$ then
$$ 0 \in \partial^C f(x) + \partial^C \phi(x).$$
\end{lmm}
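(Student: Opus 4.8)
The plan is to split the argument into a Fermat-type optimality condition followed by the Clarke sum rule, which is the content of Theorem~2 in~\cite{Rockafellar}. Writing $g := f + \phi$, the strategy is to first show that $0$ belongs to some subdifferential of $g$ at $x$ purely because $x$ minimizes $g$ locally, then to transfer this information up to $\partial^C$, and finally to dispatch the two summands.

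For the optimality step I would use the proximal subdifferential. Since $x$ is a finite local minimum, there is $\beta>0$ with $g(x+h)-g(x)\geq 0$ for all $|h|\leq \beta$; choosing $x^\star=0$ and any $\alpha>0$ in Definition~\ref{def:gprox}, the inequality $g(x+h)-g(x)\geq \langle 0,h\rangle -\alpha|h|^2$ holds trivially, so $0\in\partial^P g(x)$. I would then climb the hierarchy of subdifferentials: taking the constant sequence $x_k\equiv x$ in Definition~\ref{def:glimiting} gives $\partial^P g(x)\subseteq\partial^L g(x)$, and $\partial^L g(x)\subseteq\partial^C g(x)$ since the Clarke subdifferential is obtained from the limiting one by closed convex hull (Definition~\ref{def:gclarke}, in the form extended to lower semicontinuous functions in~\cite{Rockafellar}). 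Hence $0\in\partial^C(f+\phi)(x)$.

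For the last step I would invoke the sum rule. Because $\phi$ is convex and Lipschitz it is in particular Lipschitz near $x$, which is exactly the regularity required by Theorem~2 of~\cite{Rockafellar} to obtain
$$ \partial^C(f+\phi)(x)\subseteq \partial^C f(x)+\partial^C \phi(x). $$
Combining this with $0\in\partial^C(f+\phi)(x)$ yields the announced $0\in\partial^C f(x)+\partial^C\phi(x)$.

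The only nonelementary ingredient, and hence the main obstacle, is this sum rule: the Fermat condition and the inclusions $\partial^P\subseteq\partial^L\subseteq\partial^C$ follow directly from the definitions, whereas controlling the limiting subdifferential of a sum requires the Lipschitz regularity of one summand, which is precisely where the hypothesis on $\phi$ enters. Since the statement is presented as a special case of that theorem, I would cite it rather than reprove the underlying subdifferential calculus.
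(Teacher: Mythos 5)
Your proof is correct and takes essentially the same route as the paper, which offers no separate argument but simply presents the lemma as a special case of the exact sum rule for the Clarke subdifferential, citing the same Theorem~2 of \cite{Rockafellar} that you invoke. Your explicit verification of the Fermat step $0\in\partial^P(f+\phi)(x)\subseteq\partial^L(f+\phi)(x)\subseteq\partial^C(f+\phi)(x)$, with the Lipschitz hypothesis on $\phi$ entering only through the sum rule, just fills in the details the paper leaves implicit.
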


\mb Let us recall the variational principle of Ekeland (see \cite{Ekeland}).

\begin{prpstn}[Ekeland variational principle] \label{prop:ekeland} Let $f:\R^d \rightarrow \R\cup \{+\infty\}$ be a lower semi-continuous function which is bounded from below. Let $\epsilon>0$ and $x\in \R^d$ such that 
$$ \inf f \leq f(x) \leq \inf f + \epsilon.$$
Then for all $\lambda>0$, there exists $w\in\R^d$ satisfying
\begin{itemize}
 \item $f(w)\leq f(x)$ 
 \item $|x-w|\leq \lambda$
 \item for all $z\neq w$, $f(z)>f(w)-\frac{\epsilon}{\lambda} |z-w|$.
\end{itemize}
\end{prpstn}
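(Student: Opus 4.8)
The plan is to establish this via the classical order-completeness argument. On $\R^d$ I would introduce the relation $z \preceq y$ defined by $f(z) + \tfrac{\epsilon}{\lambda}|z-y| \le f(y)$, and first check that $\preceq$ is a genuine partial order: reflexivity is immediate, antisymmetry follows by adding the inequalities $z \preceq y$ and $y \preceq z$ (which forces $|z-y|\le 0$), and transitivity follows from the triangle inequality applied to the two defining inequalities. The key structural remark is that for fixed $y$ the lower section $S(y):=\{z : z \preceq y\}$ is nonempty (it contains $y$) and closed, the latter because $z \mapsto f(z)+\tfrac{\epsilon}{\lambda}|z-y|$ is lower semicontinuous as the sum of the lsc function $f$ and a continuous term, so its sublevel sets are closed.

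Next I would build a nested sequence of such sections. Setting $x_0:=x$ and $S_0:=S(x_0)$, I would inductively choose $x_{n+1}\in S(x_n)=:S_n$ with $f(x_{n+1}) \le \inf_{S_n} f + 2^{-n}$ (possible since $f$ is bounded from below), and set $S_{n+1}:=S(x_{n+1})$. Transitivity gives $S_{n+1}\subseteq S_n$. The crucial estimate is that $\mathrm{diam}(S_{n+1})\to 0$: for any $z\in S_{n+1}$ one has $\tfrac{\epsilon}{\lambda}|z-x_{n+1}| \le f(x_{n+1})-f(z)$ by the definition of $\preceq$, while $z\in S_n$ together with the near-minimality of $x_{n+1}$ gives $f(x_{n+1})-f(z)\le 2^{-n}$; hence every point of $S_{n+1}$ lies within $\tfrac{\lambda}{\epsilon}2^{-n}$ of $x_{n+1}$. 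By Cantor's intersection theorem (the space being complete), $\bigcap_n S_n=\{w\}$ for a unique point $w$.

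It then remains to translate the two properties of $w$ into the statement. From $w\in S_0$, i.e. $w\preceq x$, I read off $f(w)+\tfrac{\epsilon}{\lambda}|x-w|\le f(x)$, which yields $f(w)\le f(x)$ and, combined with $f(x)\le \inf f + \epsilon \le f(w)+\epsilon$, also $|x-w|\le \lambda$. For the third property I would use minimality: since $w\in S_n$ for all $n$, transitivity shows that any $z\preceq w$ satisfies $z\in\bigcap_n S_n=\{w\}$, so $z=w$; equivalently, every $z\neq w$ fails $z\preceq w$, which is precisely $f(z)>f(w)-\tfrac{\epsilon}{\lambda}|z-w|$. The only genuinely delicate point is the diameter estimate guaranteeing that the nested closed sections collapse to a single point; the partial-order verification and the final translation are routine once the relation $\preceq$ is chosen correctly.
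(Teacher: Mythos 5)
Your proof is correct, but there is nothing in the paper to compare it against: the paper states this proposition purely as a recalled classical result, with a pointer to Ekeland's original article \cite{Ekeland}, and gives no proof. What you have written is the standard complete-metric-space argument (the Bishop--Phelps-type ordering $z\preceq y \Leftrightarrow f(z)+\tfrac{\epsilon}{\lambda}|z-y|\le f(y)$, nested closed lower sections with collapsing diameter, Cantor intersection), and all the steps check out: the sections $S(y)$ are closed by lower semicontinuity, the near-minimal choice $f(x_{n+1})\le \inf_{S_n} f + 2^{-n}$ forces $|z-x_{n+1}|\le \tfrac{\lambda}{\epsilon}2^{-n}$ for $z\in S_{n+1}$, and negating $z\preceq w$ yields exactly the strict inequality $f(z)>f(w)-\tfrac{\epsilon}{\lambda}|z-w|$ demanded in the third bullet. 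Two small remarks, neither of which is a gap. First, antisymmetry of $\preceq$ can fail on $\{f=+\infty\}$ under the convention $+\infty\le+\infty$; but your argument never actually invokes antisymmetry (only transitivity, closedness, and the diameter estimate), and every $x_n$ you construct satisfies $f(x_n)\le f(x)<+\infty$, the finiteness of $f(x)$ being implicit in the hypothesis $f(x)\le \inf f+\epsilon$ with $f$ bounded below and proper. Second, in $\R^d$ one could shortcut via compactness of closed bounded sets, but your Cantor-intersection route has the advantage of working verbatim in any complete metric space, which is consistent with the paper's stated ambition (Remark \ref{rem:hilbert}) of extending its results to the Hilbertian setting.
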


\mb
The following result comes from Theorem 2.1 in \cite{Ioffe-Outrata}. For an easy reference, we detail the proof.

\begin{lmm} \label{lem:ioffe} Let $f:\R^d\rightarrow \R^+ \cup\{+\infty\}$ be a lower semi-continuous function and $x_0$ with $f(x_0)=0$. Assume there exist $\gamma,\delta>0$ such that for all 
$$x^\star \in \bigcup_{\genfrac{}{}{0pt}{}{x\in B(x_0,2\delta)}{f(x)>0}} \partial^C f(x)$$
we have $|x^\star|\geq \gamma$. Then for all $x\in B(x_0,\delta)$, $d_{\{f=0\}}(x)\leq \gamma^{-1} f(x)$. 
\end{lmm}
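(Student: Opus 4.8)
The plan is to combine Ekeland's variational principle (Proposition \ref{prop:ekeland}) with the optimality property (Lemma \ref{lem:opt}), which is the standard route to error bounds of this type. Fix $x\in B(x_0,\delta)$. I may assume $0<f(x)<+\infty$, since $f(x)=0$ gives $x\in\{f=0\}$ and the bound is trivial, while $f(x)=+\infty$ makes the right-hand side infinite. Moreover, because $x_0\in\{f=0\}$ and $|x-x_0|<\delta$, one always has $d_{\{f=0\}}(x)<\delta$, so the conclusion is immediate whenever $\gamma^{-1}f(x)\geq \delta$. Hence it suffices to treat the case $\gamma^{-1}f(x)<\delta$.

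Since $f\geq 0$ and $f(x_0)=0$, we have $\inf f=0$, so $x$ satisfies the hypothesis of Proposition \ref{prop:ekeland} with $\epsilon:=f(x)$. I would fix an arbitrary $\lambda\in(\gamma^{-1}f(x),\delta]$ (a nonempty interval by the reduction above) and apply Ekeland to obtain a point $w$ with $f(w)\leq f(x)$, $|x-w|\leq \lambda$, and such that the map $z\mapsto f(z)+\frac{\epsilon}{\lambda}|z-w|$ attains a global minimum at $w$. The function $\phi(z):=\frac{\epsilon}{\lambda}|z-w|$ is convex and Lipschitz, so Lemma \ref{lem:opt} yields $0\in\partial^C f(w)+\partial^C\phi(w)$. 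As $\partial^C\phi(w)$ is the closed ball of radius $\epsilon/\lambda$ centered at the origin, there exists $x^\star\in\partial^C f(w)$ with $|x^\star|\leq \epsilon/\lambda$.

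The crux is to rule out $f(w)>0$. From $|x-x_0|<\delta$ and $|x-w|\leq \lambda\leq\delta$ I get $|w-x_0|<2\delta$, so $w\in B(x_0,2\delta)$. If $f(w)>0$, then the hypothesis of the lemma forces $|x^\star|\geq\gamma$, whence $\gamma\leq \epsilon/\lambda=f(x)/\lambda$, that is $\lambda\leq\gamma^{-1}f(x)$, contradicting the choice $\lambda>\gamma^{-1}f(x)$. Therefore $f(w)=0$, so $w\in\{f=0\}$ and $d_{\{f=0\}}(x)\leq|x-w|\leq\lambda$. Letting $\lambda$ decrease to $\gamma^{-1}f(x)$ then gives $d_{\{f=0\}}(x)\leq\gamma^{-1}f(x)$.

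I expect the only delicate points to be bookkeeping rather than conceptual: verifying that $w$ remains in $B(x_0,2\delta)$ so the gradient lower bound genuinely applies, and correctly identifying $\partial^C\phi(w)$ as the closed ball of radius $\epsilon/\lambda$ (the Clarke subdifferential of the norm at its kink). The applicability of the exact sum rule of Lemma \ref{lem:opt} is guaranteed by $\phi$ being convex and Lipschitz, and the preliminary reduction to $\gamma^{-1}f(x)<\delta$ is exactly what lets the contradiction step close the argument; that contradiction, exploiting the uniform subdifferential lower bound, is the heart of the proof.
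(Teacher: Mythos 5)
Your proof is correct and follows essentially the same route as the paper: both reduce first to the case $\gamma^{-1}f(x)<\delta$ (the paper phrases it as $f(x)<\gamma\delta$), then apply Ekeland's principle with $\epsilon=f(x)$ and $\lambda\in\,]\gamma^{-1}\epsilon,\delta[$, invoke Lemma \ref{lem:opt} with the convex Lipschitz perturbation $\epsilon\lambda^{-1}|\cdot-w|$ to produce $x^\star\in\partial^C f(w)$ with $|x^\star|\leq\epsilon/\lambda<\gamma$, rule out $f(w)>0$ via the bound $|w-x_0|<2\delta$, and let $\lambda$ decrease to $\gamma^{-1}f(x)$. Your explicit identification of $\partial^C\phi(w)$ as the closed ball of radius $\epsilon/\lambda$ is a correct detail the paper leaves implicit; no gaps.
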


\begin{proof} Let $x\in B(x_0,\delta)$. If $f(x)\geq \gamma \delta$, then 
$$d_{\{f=0\}}(x) \leq |x-x_0| \leq \delta \leq \gamma^{-1} f(x).$$
Now, we assume that $0<f(x)< \gamma \delta$ and we set $\epsilon:= f(x)$. Applying the variational principle of Ekeland (see Proposition \ref{prop:ekeland}) to $f$ with $\epsilon$ and any $\lambda\in ]\gamma^{-1}\epsilon,\delta[$. There exists $w=w(\lambda)\in\R^d$ such that $f(w)\leq f(x)$, $|x-w|\leq \lambda$ and 
$$ \forall z\neq w, \quad f(z)>f(w)-\frac{\epsilon}{\lambda} |z-w|.$$
Consequently, $w$ minimizes $f + \epsilon \lambda^{-1}|\cdot-w|$ and by Lemma \ref{lem:opt} it comes 
$$ 0\in \partial^C f(w) + \partial^C \psi(w)$$
where $\psi(\cdot)= \epsilon \lambda^{-1} |\cdot-w|$. \\
So there exists $x^\star \in \partial^C f(w)$ with $|x^\star|\leq \epsilon\lambda^{-1} <\gamma$. If $f(w)>0$, that is in contradiction with the assumptions as $|w-x_0|\leq |w-x|+|x-x_0|\leq 2\delta$ and so we deduce that necessarily $f(w)=0$. Then we conclude to the desired result, since
$$d_{\{f=0\}}(x) \leq |x-w|\leq \lambda$$
holds for every $\lambda\in ]\gamma^{-1}\epsilon,\delta[$.
\end{proof}

\mb From now on, we come back to the framework of the previous sections and prove the metric qualification condition of sets $\Qc_i$.

\mb
In the sequel, we introduce convex sets $C_i $ for $i \in \{1,...,p\}$ and their intersection $ \dsp C=\bigcap_{i=1}^p C_i.$
We consider the following set-valued map $F$ 
\begin{equation} F: \left\{ \begin{array}{lcl} \R^d &\rightrightarrows &\R^{dp} \\ x &\mapsto & F(x):=(C_1-x)\times\dots \times (C_p-x). \end{array} \right. \label{def:F}\end{equation}
Let us note that $0\in F(x)$ if and only if $x\in C$. 

\begin{prpstn} \label{prop:sousdif} Consider the function $f$ defined by $f(x):= d_{F(x)}(0)$ where $F$ is given by (\ref{def:F}). The map $f$ is Lipschitz continuous and for all $x\notin C$,
$$ \partial^P f(x) \subset \partial^C f(x) = \left\{ \sum_{i,\ x\notin C_i} \frac{y_i}{|y|} \right\},$$
where $y = \P_{F(x)}(0)$. In other words, for all $i\in\{1,...,p\}$, $y_i+x\in \P_{C_i}(x)$, hence  $-y_i\in \NN(C_i,x+y_i)$.
 \end{prpstn}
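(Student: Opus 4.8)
The plan is to rewrite the auxiliary distance $f(x)=d_{F(x)}(0)$ in closed form, exhibit it as a $C^{1}$ function away from $C$, and then identify its (singleton) Clarke subdifferential with the ordinary gradient. Since $F(x)=(C_1-x)\times\cdots\times(C_p-x)$ is a Cartesian product and we project the origin $0=(0,\dots,0)\in\R^{dp}$ onto it, the projection splits blockwise: the $i$-th block of $y=\P_{F(x)}(0)$ is $y_i=\P_{C_i-x}(0)=\P_{C_i}(x)-x$. In particular $x+y_i=\P_{C_i}(x)$, and the standard variational characterization of the projection onto a convex set gives $-y_i=x-\P_{C_i}(x)\in\NN(C_i,x+y_i)$; moreover $y_i=0$ exactly when $x\in C_i$, so only the indices with $x\notin C_i$ contribute below. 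Finally $f(x)^2=|y|^2=\sum_{i=1}^p|y_i|^2=\sum_{i=1}^p d_{C_i}(x)^2$.

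The Lipschitz continuity is then immediate: each $d_{C_i}$ is $1$-Lipschitz, and $f=\big(\sum_i d_{C_i}^2\big)^{1/2}$ is the Euclidean norm of the vector $(d_{C_1},\dots,d_{C_p})$, so
\[ |f(x)-f(x')|\le\Big(\sum_{i=1}^p\big|d_{C_i}(x)-d_{C_i}(x')\big|^2\Big)^{1/2}\le\sqrt{p}\,|x-x'|. \]

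For the subdifferential, put $g:=f^2=\sum_i d_{C_i}^2$. For a convex set the metric projection $\P_{C_i}$ is single-valued and nonexpansive, so $\tfrac12 d_{C_i}^2$ is continuously differentiable on all of $\R^d$ with gradient $x\mapsto x-\P_{C_i}(x)=-y_i$; hence $g\in C^1(\R^d)$ with $\nabla g(x)=2\sum_{i,\,x\notin C_i}(x-\P_{C_i}(x))$. On the open set $\R^d\setminus C=\{g>0\}$ the function $f=\sqrt g$ is then itself continuously differentiable, and the chain rule produces $\nabla f(x)=\tfrac{1}{2|y|}\nabla g(x)=\tfrac{1}{|y|}\sum_{i,\,x\notin C_i}(x-\P_{C_i}(x))$, i.e. (up to orientation) the normalized combination of the residuals $y_i$ announced in the statement. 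Since $f$ is continuously differentiable near $x$ it is strictly differentiable there, so $\partial^C f(x)=\{\nabla f(x)\}$ is exactly this single point; and $\partial^P f(x)\subset\partial^C f(x)$ holds in general, since $\partial^P f\subset\partial^L f\subset\partial^C f$ directly from Definitions \ref{def:gprox}--\ref{def:gclarke}.

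The delicate point is not the value of the derivative but the equality $\partial^C f(x)=\{\nabla f(x)\}$: mere differentiability at a single point does not pin down the Clarke subdifferential, so one genuinely needs $f$ to be continuously (indeed Lipschitz-) differentiable on a neighbourhood of $x$. This is exactly what the $C^{1,1}$-regularity of the squared distance to a convex set provides, itself a consequence of the nonexpansiveness of $\P_{C_i}$; the remaining steps are the blockwise bookkeeping recorded above.
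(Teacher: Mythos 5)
Your proof is correct, but it takes a genuinely different --- and in fact more elementary --- route than the paper's. The paper factors $f = d_\Pi\circ\phi$ with $\Pi = C_1\times\dots\times C_p$ and $\phi(x)=(x,\dots,x)$, invokes the exact Clarke chain rule (Corollary 1 of \cite{Rockafellar}), uses convexity of $d_\Pi$ to pass from $\partial^C$ to $\partial^P$, then pins down $\partial^P d_\Pi(\phi(x))$ block by block through a first-order expansion of $\sqrt{d_\Pi(\phi(x))^2 + d_{C_i}(x+h_i)^2 - d_{C_i}(x)^2}$, and finishes with Theorem 1.105 of \cite{Mordu}; this yields only the \emph{inclusion} of $\partial^C f(x)$ in the singleton, equality following because the Clarke subdifferential of a Lipschitz function is nonempty. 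You instead write $f=\bigl(\sum_i d_{C_i}^2\bigr)^{1/2}$ and use the classical fact that $\frac12 d_{C_i}^2$ is $C^{1,1}$ on all of $\R^d$ with gradient $x-\P_{C_i}(x)$ (a consequence of nonexpansiveness of the projection onto a closed convex set), so that $f$ is $C^1$ on the open set $\{f>0\}$, strict differentiability gives $\partial^C f(x)=\{\nabla f(x)\}$ outright, and $\partial^P f\subset\partial^L f\subset\partial^C f$ is definitional --- you correctly flag that pointwise differentiability alone would not suffice here. Your route avoids both external subdifferential-calculus results and produces the equality directly; what it gives up is robustness, since it leans on convexity of each $C_i$ through the global smoothness of $d_{C_i}^2$, whereas the paper's blockwise proximal computation is closer in spirit to arguments that survive for merely prox-regular blocks. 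One remark on your hedge ``up to orientation'': it is exactly right, and the discrepancy sits in the paper, not in your computation. With the statement's convention $y=\P_{F(x)}(0)$, i.e. $y_i=\P_{C_i}(x)-x$, your gradient is $|y|^{-1}\sum_{i,\ x\notin C_i}\bigl(x-\P_{C_i}(x)\bigr)=-\sum_{i}y_i/|y|$; the paper's own proof ends by setting $y:=\phi(x)-z$ with $z=\P_\Pi(\phi(x))$, i.e. $y_i=x-\P_{C_i}(x)$, which agrees with your sign and contradicts the statement's definition of $y$. The slip is harmless downstream (in the proof of Theorem \ref{thm:condqualif} only $|\qq^\star|$ is used), but your orientation is the correct one.
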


\begin{proof} For all $x\in\R^d$, 
$$ f(x) = d_{F(x)}(0)=d_\Pi(\phi(x))$$
where $\Pi:=\otimes_{i=1}^p C_i$ and $\phi(x):=(x,\dots, x)\in\R^{dp}$.
For $x\notin C$,
$$ \partial^C f(x) = \partial^C (d_{\Pi} \circ \phi) (x) =\phantom{}^t (1,\dots, 1) \cdot \partial^C d_{\Pi} (\phi(x))$$
thanks to Corollary 1 in \cite{Rockafellar}.
By convexity of the sets $C_i$, $d_\Pi$ is a convex function and so 
$$ \partial^C f(x) = \phantom{}^t (1,\dots, 1) \cdot  \partial^P d_{\Pi} (\phi(x)),$$
see Remark \ref{rem:interesting}.
First we claim that
\begin{equation} \dsp  \partial^P d_{\Pi} (\phi(x)) \subset \left[\otimes_{i=1}^p {\mathcal E}_i(x)  \right] \bigcap S(0,1), \label{eq:cone} \end{equation}
with ${\mathcal E}_i(x):=\frac{d_{C_i}(x)}{d_\Pi(\phi(x))} \partial^P d_{C_i} (x)$ if $x\notin C_i$ and ${\mathcal E}_i(x):=\{0\}$ else. \\
Indeed, let $x^\star$ belong to $\partial^P d_{\Pi}(\phi(x))$. By definition, for some $\alpha>0$ and for all small enough $h\in \R^{dp}$,
$$d_{\Pi}(\phi(x)+h)-d_{\Pi}(\phi(x)) \geq \ll x^\star , h \rr -\alpha |h|^2.$$
Let us fix an index $i\in\{1,\dots , p\}$ and set $h=(0,\dots,0,h_i,0,\dots,0)$. Since $d_\Pi(\phi(\cdot))^2=\sum_i d_{C_i}^2$, it follows that for all small enough $h_i\in \R^d$
$$\sqrt{d_{\Pi}(\phi(x))^2 + d_{C_i}(x+h_i)^2-d_{C_i}(x)^2} -\sqrt{d_{\Pi}(\phi(x))^2} \geq \ll x^\star_i , h_i \rr -\alpha |h_i|^2.$$
By a first order expansion, we get
$$ \frac{d_{C_i}(x+h_i)^2-d_{C_i}(x)^2}{2d_{\Pi}(\phi(x))} \geq \ll x^\star_i , h_i \rr -\alpha' |h_i|^2,$$
with another numerical constant $\alpha'$. Then, we obtain with another constant $\alpha''$ and for all small enough $h_i\in \R^d$
$$ \frac{d_{C_i}(x)}{d_{\Pi}(\phi(x))} \left(d_{C_i}(x+h_i)-d_{C_i}(x) \right) \geq \ll x^\star_i , h_i \rr -\alpha'' |h_i|^2.$$
If $x\in C_i$ then $d_{C_i}(x)=0$ and so we deduce that $x^\star_i=0$. If $x\notin C_i$ then by definition of the proximal normal cone, 
$$ \frac{d_{\Pi}(\phi(x))}{d_{C_i}(x)} x^\star_i \in \partial^P d_{C_i}(x) \subset S(0,1),$$
see Remark \ref{rem:interesting}. \\
So $|x^\star_i|= d_{C_i}(x) d_\Pi(\phi(x))^{-1}$ and so $|x^\star|=1$, which concludes the proof of (\ref{eq:cone}). \\
Let us now finish the proof of the proposition. Thus
$$ \partial^C f(x) \subset \sum_{i,\ x\notin C_i} {\mathcal E}_i(x) \subset  \sum_{i,\ x\notin C_i} \frac{d_{C_i}(x)}{d_\Pi(\phi(x))} \partial^P d_{C_i}(x).$$
We set $z=(z_1,...,z_p)\in \R^{dp}$ with for all $i$, $z_i=\P_{C_i}(x)$ or equivalently $z=\P_{\Pi}(\phi(x))$. By Theorem 1.105 in \cite{Mordu}, 
$$ \partial^P d_{C_i}(x) \subset \partial^P d_{C_i}(z_i) \cap S(0,1) = \left\{ \frac{x-z_i}{|x-z_i|}\right\}.$$
Consequently, we have
$$ \partial^C f(x) \subset \sum_{i,\ x\notin C_i} {\mathcal E}_i(x) \subset  \left\{ \sum_{i,\ x\notin C_i} \frac{d_{C_i}(x)}{d_\Pi(\phi(x))} \frac{x-z_i}{|x-z_i|}  \right\} = \left\{ \sum_{i,\ x\notin C_i} \frac{x-z_i}{|\phi(x)-z|}  \right\}.$$
We finish the proof by choosing $y:=\phi(x)-z\in \R^{dp}$. 
\end{proof}

\begin{rmrk} \label{rem:interesting} Let $S\subset \R^d$ be a closed convex set and $x\notin S$, then $\partial^P d_S(x) =\partial^C d_S(x) \subset S(0,1)$. Indeed with $w:= \P_{S}(x)$ and vectors $h=\epsilon(w-x)$ for small enough $\epsilon$, we remark that $d_S(x+\epsilon(w-x)) = d_S(x)-\epsilon|w-x|$. Hence, by Definition \ref{def:gprox}, we obtain for every $x^\star \in \partial^P d_S(x)$
$$ -|h| \geq \ll x^\star, h \rr - \alpha |h|^2.$$
By dividing by $|h|$ and letting $\epsilon$ go to $0$, we deduce that $|x^\star|\geq 1$. We also conclude to $|x^\star|=1$ since $d_S$ is $1$-Lipschitz.
\end{rmrk}

\begin{thrm}
 There exist $r$ and $\Theta $ such that for all $t \in [0,T]$, for all $\tilde{\qq} \in U(t)$ satisfying $d_{\Qc(t,\tilde{\qq})}(\tilde{\qq})\leq r/4$ and all $\qq \in B(\tilde{\qq}, r/4)$, 
$$d_{\Qc(t,\tilde{\qq})}(\qq)  \leq \Theta \sum_{i=1} ^p d_{\Qc_i(t,\tilde{\qq})}(\qq).$$
Indeed we can choose $\Theta=\frac{2\gamma \beta}{\alpha}$ and $r=\min(\frac{4\rho}{13\beta},\frac{\alpha}{2M\gamma})$.
\label{thm:condqualif}
\end{thrm}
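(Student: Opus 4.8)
The plan is to deduce the statement from the Ioffe-type estimate of Lemma \ref{lem:ioffe}, applied to the function $f(x):=d_{F(x)}(0)=\bigl(\sum_{i=1}^p d_{\Qc_i(t,\tilde{\qq})}(x)^2\bigr)^{1/2}$ associated through (\ref{def:F}) to the convex sets $C_i:=\Qc_i(t,\tilde{\qq})$ (here $t$ and $\tilde{\qq}\in U(t)$ are fixed throughout). Its zero set is exactly $\{f=0\}=\bigcap_{i=1}^p \Qc_i(t,\tilde{\qq})=\Qc(t,\tilde{\qq})$, and since $f(x)\le \sum_{i=1}^p d_{\Qc_i(t,\tilde{\qq})}(x)$, any bound of the form $d_{\{f=0\}}(\qq)\le \Theta\, f(\qq)$ immediately yields the claimed inequality. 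So the whole problem reduces to producing a uniform lower bound $|x^\star|\ge \gamma_{\mathrm I}$ for every $x^\star\in\partial^C f(x)$ on a suitable ball, which will give $\Theta=\gamma_{\mathrm I}^{-1}$.

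Next I would compute $\partial^C f$ with Proposition \ref{prop:sousdif}: for $x\notin \Qc(t,\tilde{\qq})$ it is the singleton $\partial^C f(x)=\{\sum_{i,\,x\notin C_i} y_i/|y|\}$ with $y=\P_{F(x)}(0)$ and $y_i=x-\P_{C_i}(x)$. Because each $C_i$ is the half-space $\{\,\tilde{\qq}'\,:\,g_i(t,\tilde{\qq})+\ll \nabla g_i(t,\tilde{\qq}),\tilde{\qq}'-\tilde{\qq}\rr\ge 0\,\}$ with inner normal $\nabla g_i(t,\tilde{\qq})$, the vector $y_i$ is a nonnegative multiple of $-\nabla g_i(t,\tilde{\qq})$; writing $y_i/|y|=-\lambda_i\nabla g_i(t,\tilde{\qq})$ one checks $\lambda_i=d_{C_i}(x)\bigl(|y|\,|\nabla g_i(t,\tilde{\qq})|\bigr)^{-1}\ge 0$. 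Hence the unique element of $\partial^C f(x)$ is $-\sum_{i\in J}\lambda_i\nabla g_i(t,\tilde{\qq})$, where $J:=\{i:\,x\notin \Qc_i(t,\tilde{\qq})\}$. Moreover $|\nabla g_i|\le\beta$ from (\ref{gradg}) together with $\sum_i d_{C_i}(x)\ge \bigl(\sum_i d_{C_i}(x)^2\bigr)^{1/2}=|y|=f(x)$ gives $\sum_{i\in J}\lambda_i\ge \beta^{-1}$.

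The heart of the matter is to bound $|\sum_{i\in J}\lambda_i\nabla g_i(t,\tilde{\qq})|$ from below via the positive-linear-independence assumption (\ref{inegtrianginverserho}). The difficulty, and the main obstacle, is that (\ref{inegtrianginverserho}) is postulated only at points of $Q(t)$, with gradients evaluated there and active set $I_\rho$, whereas here the base point $\tilde{\qq}$ lies merely in $U(t)$ and $J$ is the set of \emph{linearly} violated constraints. I would bridge this as follows. Since $\Qc(t,\tilde{\qq})\subset Q(t)$ by convexity of the $g_i(t,\cdot)$, we get $d_{Q(t)}(\tilde{\qq})\le d_{\Qc(t,\tilde{\qq})}(\tilde{\qq})\le r/4$, below the prox-regularity radius $\eta=\alpha/(M\gamma)$ of $Q(t)$ (Proposition \ref{Qprox}), so $\bar{\qq}:=\P_{Q(t)}(\tilde{\qq})\in Q(t)$ is well defined with $|\bar{\qq}-\tilde{\qq}|\le r/4$. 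Applying Lemma \ref{lem:ioffe} with $x_0:=\P_{\Qc(t,\tilde{\qq})}(\tilde{\qq})$ and $\delta:=r/2$ (so that $B(\tilde{\qq},r/4)\subset B(x_0,\delta)$), I work on $x\in B(x_0,2\delta)$. For such $x$ and $i\in J$, the strict violation $g_i(t,\tilde{\qq})+\ll\nabla g_i(t,\tilde{\qq}),x-\tilde{\qq}\rr<0$ with (\ref{gradg}) forces $g_i(t,\tilde{\qq})\le\beta|x-\tilde{\qq}|$ to be small, and transporting this to $\bar{\qq}$ through $|g_i(t,\bar{\qq})-g_i(t,\tilde{\qq})|\le\beta|\bar{\qq}-\tilde{\qq}|$ yields $g_i(t,\bar{\qq})\le\rho$, i.e. $J\subset I_\rho(t,\bar{\qq})$, as soon as $r$ is of order $\rho/\beta$ (the first term in the definition of $r$). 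Then (\ref{inegtrianginverserho}) at $\bar{\qq}$ gives $|\sum_{i\in J}\lambda_i\nabla g_i(t,\bar{\qq})|\ge\gamma^{-1}\sum_{i\in J}\lambda_i|\nabla g_i(t,\bar{\qq})|\ge\alpha\gamma^{-1}\sum_{i\in J}\lambda_i$, and replacing $\nabla g_i(t,\bar{\qq})$ by $\nabla g_i(t,\tilde{\qq})$ costs at most $M|\bar{\qq}-\tilde{\qq}|\le Mr/4$ per term by the Hessian bound (\ref{hessg}). Collecting these estimates gives $|x^\star|\ge(\alpha\gamma^{-1}-Mr/4)\sum_{i\in J}\lambda_i\ge \tfrac{7\alpha}{8\gamma\beta}$ once $r\le\alpha/(2M\gamma)$.

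With this lower bound, Lemma \ref{lem:ioffe} yields $d_{\Qc(t,\tilde{\qq})}(\qq)=d_{\{f=0\}}(\qq)\le \tfrac{8\gamma\beta}{7\alpha}f(\qq)$ for every $\qq\in B(\tilde{\qq},r/4)$, and since $f(\qq)\le\sum_{i=1}^p d_{\Qc_i(t,\tilde{\qq})}(\qq)$ and $8/7\le 2$, this is the announced inequality with $\Theta=2\gamma\beta/\alpha$. I expect the genuinely delicate point to be the radius bookkeeping: $r$ must be small enough simultaneously to place every linearly violated index inside $I_\rho(t,\bar{\qq})$ (forcing $r\lesssim\rho/\beta$) and to keep the Hessian perturbation of the gradients strictly dominated by the $\alpha/\gamma$ arising from (\ref{inegtrianginverserho}) (forcing $r\lesssim\alpha/(M\gamma)$), which is exactly the origin of the explicit value $r=\min(4\rho/(13\beta),\alpha/(2M\gamma))$.
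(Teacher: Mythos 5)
Your proposal is correct and follows essentially the same route as the paper: the reduction to Lemma \ref{lem:ioffe} via $f=d_{F(\cdot)}(0)$ with $C_i=\Qc_i(t,\tilde{\qq})$, the computation of $\partial^C f$ through Proposition \ref{prop:sousdif} as a positive combination of the $\nabla g_i(t,\tilde{\qq})$, the inclusion of the violated index set into $I_\rho$ at a nearby point of $Q(t)$ so that (\ref{inegtrianginverserho}) applies there, and the Hessian-perturbation bookkeeping giving the uniform lower bound of order $\alpha/(\gamma\beta)$ and hence $\Theta=2\gamma\beta/\alpha$ with the same $r$. The only deviations are minor and valid: you anchor (\ref{inegtrianginverserho}) at $\P_{Q(t)}(\tilde{\qq})$ whereas the paper uses a point $\qq_1\in \Qc(t,\tilde{\qq})\cap B(\tilde{\qq},r/4)$ furnished directly by the hypothesis (so your appeal to prox-regularity is superfluous), and you obtain $g_i(t,\tilde{\qq})\leq \beta|x-\tilde{\qq}|$ directly from the violated linearized constraint, which cleanly bypasses the paper's detour through the projections $\pp_i$ and Proposition \ref{prop:d1cont} at that step.
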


\begin{proof} Consider $r=\min(\frac{4\rho}{13\beta},\frac{\alpha}{2M\gamma})$.
Let us fix  $t \in [0,T]$ and $\tilde{\qq} \in U(t)$ satisfying $d_{\Qc(t,\tilde{\qq})}(\tilde{\qq})\leq r/4$. Consequently there exists $\qq_1\in B(\tilde{\qq},r/4)$ such that $\qq_1\in \Qc(t,\tilde{\qq})$. We define a Lipschitz map $f:= d_{F(\cdot)}(0)$ where $F$ is given by (\ref{def:F}) with $C_i= \Qc_i(t,\tilde{\qq})$.
First we check the assumptions of Lemma \ref{lem:ioffe} for the function $f$ with $x_0=\qq_1$. Indeed $f(\qq_1)=0$ because $\qq_1\in  \Qc(t,\tilde{\qq})$. \\
Let us consider $\qq \in B(\qq_1, r) \cap \Qc(t,\tilde{\qq})^c$, so $\qq\in B(\tilde{\qq},2r)$.
By Proposition \ref{prop:sousdif}, $ \partial^C f(\qq) = \{ \qq^\star\}$ where  $$\qq^\star:= \sum_{i,\ \qq \notin \Qc_i(t,\tilde{\qq})} \pp_i^\star $$
with $\pp^\star = \pp/|\pp|$ and $ \pp =\P_{F(\qq)}(0)$.
Moreover for $i$ satisfying $\qq \notin \Qc_i(t,\tilde{\qq})$, $-\pp_i^\star \in \NN(C_i, \qq +\pp_{i} )$.
Let us define
$$ J(t,\qq):=\left\{j,\ g_j(t,\tilde{\qq})+\ll \nabla g_j(t,\tilde{\qq}),\qq-\tilde{\qq}\rr <0\right\}=\{j,\ \qq\notin \Qc_j(t,\tilde{\qq})\}.$$
It is well-known that there also exist nonnegative reals $(\lambda_i)_{i\in J(t,\qq)}$ satisfying
$$ \qq^\star = \sum_{i\in J(t,\qq)}  \lambda_i \nabla g_i(t,\tilde{\qq}).$$
Hence by Assumption (\ref{hessg})
\begin{align*}
 |\qq^\star| & = \left| \sum_{i \in J(t,\qq) } \lambda_i \nabla g_i(t,\tilde{\qq}) \right| \\
& \geq \left| \sum_{i \in J(t,\qq) } \lambda_i \nabla g_i(t,\qq_1) \right| -  \frac{Mr}{4}  \sum_{i \in J(t,\qq) } \lambda_i. 
\end{align*}
Since $ \qq+\pp_i \in \P_{C_i} (\qq)$, Proposition \ref{prop:d1cont} yields $|\pp_i |= d_{C_i}(\qq) \leq \frac{M}{2\alpha}(2r)^2 \leq r $. Moreover for all $i\in J(t,\qq)$, $\qq+\pp_i\in \partial \Qc_i(t,\tilde{\qq})$ so we have by Assumption (\ref{gradg}) with the convexity of $g_i$
\begin{align*} 
g_i(t,\tilde{\qq}) & \leq -\ll \nabla g_i(t,\tilde{\qq}),\qq+\pp_i-\tilde{\qq}\rr\leq \beta (|\qq-\tilde{\qq}|+|\pp_i|) \\
 & \leq 3\beta r.
\end{align*}
Hence by Assumption (\ref{gradg}),
$$ g_i(t,\qq_1) \leq 3\beta r +\beta r/4=\frac{13}{4} \beta r.$$
Due to the choice of $r$, we deduce that $13\beta r \leq 4\rho$ and thus $J(t,\qq)\subset I_\rho(t,\qq_1)$. From Assumptions (\ref{gradg}), (\ref{hessg}) and (\ref{inegtrianginverserho}), we deduce that
\begin{align*}
 |\qq^\star| & \geq  \gamma^{-1} \sum_{i \in J(t,\qq) } \lambda_i \left|\nabla g_i(t,\qq_1) \right| - \frac{Mr}{4}  \sum_{i \in J(t,\qq) } \lambda_i \\
& \geq ( \frac{\alpha}{\gamma} - \frac{M r}{4}) \sum_{i \in J(t,\qq) } \lambda_i  \\
& \geq \frac{\alpha} {2 \gamma} \sum_{i \in J(t,\qq) } \lambda_i   \\
& \geq \frac{\alpha} {2 \gamma \beta} \sum_{i \in J(t,\qq) } \lambda_i \left|\nabla g_i(t,\tilde{\qq}) \right| \\
& \geq \frac{\alpha} {2 \gamma \beta} \sum_{i \in J(t,\qq) } \left|\pp_i^\star \right| \\
& \geq \frac{\alpha} {2 \gamma \beta} |\pp^\star|= \frac{\alpha} {2 \gamma \beta}.
\end{align*}
We can also apply Lemma \ref{lem:ioffe} and we obtain that for all $\qq\in B(\qq_1,r/2) \supset B(\tilde{\qq},r/4)$
$$ d_{\Qc(t,\tilde{\qq})}(\qq) \leq \Theta \left(\sum_{i\in J(t,\qq)} d_{\Qc_i(t,\tilde{\qq})}(\qq)^2 \right)^{1/2} \leq \Theta \sum_{i=1}^p d_{\Qc_i(t,\tilde{\qq})}(\qq).$$
\end{proof}

\section{Numerical simulations}

The aim of this section is to illustrate the convergence order with an example (due to the modelling of crowd motion in emergency evacuation). We refer the reader to~\cite{TheseJu, MV2} for a complete and detailed description of this model.

\mb We quickly recall the model. It handles contacts, in order to deal with local interactions between people and to describe the whole dynamics of the pedestrian traffic. This microscopic model for crowd motion (where people are identified to rigid disks) rests on two principles. On the one hand, each individual has a spontaneous velocity that he would like to have in the absence of other people. On the other hand, the actual velocity must take into account congestion. Those two principles lead to define the actual velocity field as the Euclidean projection of the spontaneous velocity over the set of admissible velocities (regarding the non-overlapping constraints between disks).

\mb More precisely, we consider $N$ persons identified to rigid disks. For convenience, the disks are supposed here to have the same radius $r$. The
center of the $i$-th disk is denoted by $\qq_i\in \R^2$. Since overlapping is forbidden, the  
vector of positions $\qqq=(\qq_1,..,\qq_N) \in
\R^{2N}$ has to belong to the ``set of feasible configurations'', defined by 
\be{eq:Q} Q:=\left\{ \qqq  \in
\R^{2N},\ D_{ij}( \qqq) \geq 0\quad   \forall \,i \neq j  \right\},  \ee 
where $ D_{ij}(\qqq)=|\qq_i-\qq_j|-2r $ is the signed distance between disks $i$ and $j$. If the global spontaneous velocity of the crowd is denoted by $\UU(\qqq)=(U_1(\qq_1),..,U_N(\qq_N)) \in \R^{2N}$, the previous crowd motion model can be described by the following differential inclusion:
\be{eq:model3} \frac{d\qqq}{dt} + \NN(Q,\qqq) \ni \UU(\qqq).\ee

This evolution problem fits into the theoretical framework developed in this paper. 

For the numerical simulations, we treat an emergency evacuation of a square room (10x10) initially containing $N=150$ persons (identified to rigid disks of radius $r=0.2$). Since the exact solution is unknown, we approach the error as follows
\be{approx} \|\qqq-\qqq_h\|_{L^\infty} \simeq \max_{i=1,...,10} |\qqq_{h_{min}}(t_i)-\qqq_h(t_i)| := e_h \ee
where $t_i:=i T/10$. We choose $h_{min}:=0.01$ and $h\in\{0.02,0.025,0.04,0.05,0.0625,0.08,0.1,0.2,0.5\}$.

\begin{figure}[ht!]
\centering
\psfrag{a}[c]{$\log(h)$}
\psfrag{b}[c]{$\log(e_h)$}
\includegraphics[width=0.8\textwidth]{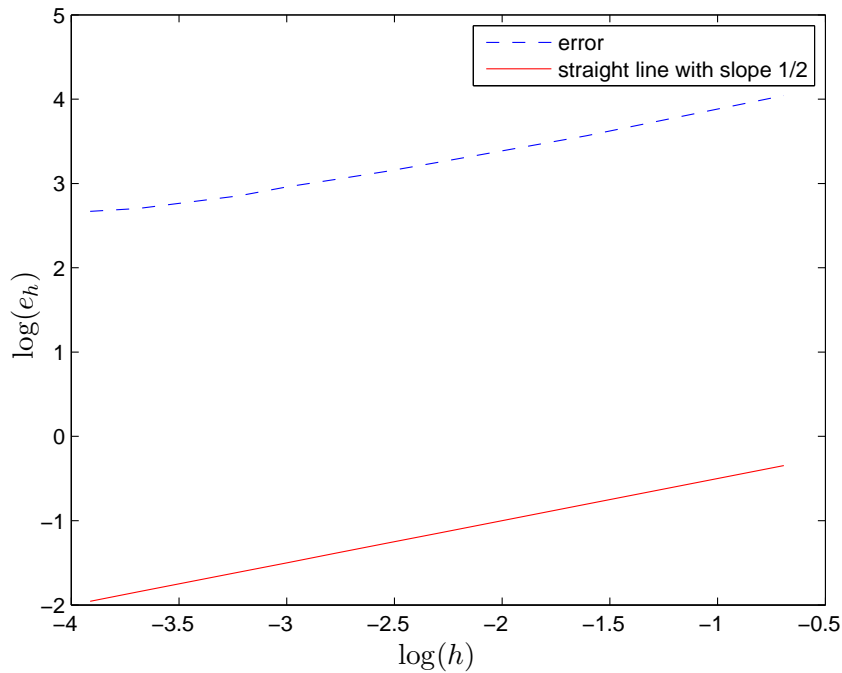}
\caption{Log-log plot of the error $e_h$ versus $h$.}
\label{figure1}
\end{figure}

We observe in Figure \ref{figure1} that the empirical order of convergence is $\frac{1}{2}$. Note that for a time step $h$ close to $h_{min}$, the approximation (\ref{approx}) is not valid.

\end{document}